\documentclass[12pt]{amsart}
\usepackage{mathdots}
\usepackage[margin=1.25in]{geometry}

\usepackage{amsmath}
\usepackage{amsfonts}
\usepackage{amssymb}
\usepackage{amscd}
\usepackage{graphicx}
\usepackage[abbrev,alphabetic]{amsrefs}
\RequirePackage[dvipsnames,usenames]{color}
\usepackage{soul,xcolor}
\setstcolor{red}
\usepackage{stmaryrd}
\usepackage{multicol}

\newcommand\hidden[1]{} 
\usepackage{mathtools}
\newtagform{hidden}[\hidden]{}{}

\usepackage{hyperref}

\usepackage{amsthm}
\usepackage{comment}
\usepackage[all,cmtip]{xy}
\usepackage{tikz-cd}
\usetikzlibrary{cd}

\makeatletter
\def\@tocline#1#2#3#4#5#6#7{\relax
  \ifnum #1>\c@tocdepth 
  \else
    \par \addpenalty\@secpenalty\addvspace{#2}%
    \begingroup \hyphenpenalty\@M
    \@ifempty{#4}{%
      \@tempdima\csname r@tocindent\number#1\endcsname\relax
    }{%
      \@tempdima#4\relax
    }%
    \parindent\z@ \leftskip#3\relax \advance\leftskip\@tempdima\relax
    \rightskip\@pnumwidth plus4em \parfillskip-\@pnumwidth
    #5\leavevmode\hskip-\@tempdima
      \ifcase #1
       \or\or \hskip 1em \or \hskip 2em \else \hskip 3em \fi%
      #6\nobreak\relax
    \hfill\hbox to\@pnumwidth{\@tocpagenum{#7}}\par
    \nobreak
    \endgroup
  \fi}
\makeatother

\hypersetup{
bookmarks,
bookmarksdepth=3,
bookmarksopen,
bookmarksnumbered,
pdfstartview=FitH,
colorlinks,backref,hyperindex,
linkcolor=Sepia,
anchorcolor=BurntOrange,
citecolor=MidnightBlue,
citecolor=OliveGreen,
filecolor=BlueViolet,
menucolor=Yellow,
urlcolor=OliveGreen
}

\newsavebox{\pullback}
\sbox\pullback{%
\begin{tikzpicture}%
\draw (0,0) -- (1ex,0ex);%
\draw (1ex,0ex) -- (1ex,1ex);%
\end{tikzpicture}}

\newsavebox{\pullbackdl}
\sbox\pullbackdl{%
\begin{tikzpicture}%
\draw (-1ex,0ex) -- (0ex,0ex);%
\draw (0ex,-1ex) -- (0ex,0ex);%
\end{tikzpicture}}

\newsavebox{\pushoutdr}
\sbox\pushoutdr{%
\begin{tikzpicture}%
\draw (-1ex,-1ex) -- (-1ex,0ex);%
\draw (-1ex,0ex) -- (0ex,0ex);%
\end{tikzpicture}}

\newcommand{\cred}{\color{red}}
\newcommand{\cblue}{\color{blue}}

\newcommand{\mustata}{Musta{\c{t}}{\u{a}}}

\newcommand{\cExt}{\mathcal{E}xt}

\newcommand{\bC}{\mathbb{C}}

\newcommand{\bQ}{\mathbb{Q}}

\newcommand{\cH}{\mathcal{H}}

\newcommand{\cM}{\mathcal{M}}
\newcommand{\cO}{\mathcal{O}}

\newcommand{\m}{\mathfrak{m}}
\newcommand{\p}{\mathfrak{p}}

\DeclareMathOperator{\Gr}{Gr}
\DeclareMathOperator{\DR}{DR}

\DeclareMathOperator{\DDB}{\underline{\Omega}}

\DeclareMathOperator{\Spec}{Spec}

\DeclareMathOperator{\Hom}{Hom}

\theoremstyle{plain}
\newtheorem{theorem}{Theorem}[section]

\newtheorem{proposition}[theorem]{Proposition}
\newtheorem{lemma}[theorem]{Lemma}
\newtheorem{corollary}[theorem]{Corollary}

\newtheorem{claim}[theorem]{Claim}
\newtheorem*{claim*}{Claim}

\theoremstyle{definition}
\newtheorem{definition}[theorem]{Definition}

\newtheorem{setting}[theorem]{Setting}

\newtheorem*{setup*}{Setup}

\theoremstyle{remark}
\newtheorem{remark}[theorem]{Remark}

\numberwithin{equation}{theorem}



\newif\ifshowColoursAndTodoes
\showColoursAndTodoestrue

\ifshowColoursAndTodoes
\def\todo#1{\textcolor{Mahogany}%
{\footnotesize\newline{\color{Mahogany}\fbox{\parbox{\textwidth-15pt}{\textbf{todo: } #1}}}\newline}}
\def\commentbox#1{\textcolor{Mahogany}%
{\footnotesize\newline{\color{Mahogany}\fbox{\parbox{\textwidth-15pt}{\textbf{comment: } #1}}}\newline}}

\else
\def\todo#1{}
\def\commentbox#1{}
\renewcommand{\st}[1]{}
\colorlet{red}{black!100} 
\colorlet{teal}{black!100} 
\colorlet{brown}{black!100} 
\colorlet{blue}{black!100} 
\colorlet{magenta}{black!100} 
\colorlet{purple}{black!100} 
\colorlet{cyan}{black!100} 
\fi

\newcommand{\kdot}{{{\,\begin{picture}(1,1)(-1,-2)\circle*{2}\end{picture}\,}}}

\title[Higher singularities for hypersurfaces]{Higher singularities for hypersurfaces}

\author{Mircea \mustata}
\address{Department of Mathematics, University of Michigan, 530 Church Street, Ann Arbor, MI 48109, USA} 
\email{mmustata@umich.edu}

\author{Jakub Witaszek} 
\address{Northwestern University, Department of Mathematics, Lunt Hall, 2033 Sheridan Road, Evanston, IL 60208, USA}
\email{jakub.witaszek@northwestern.edu}

\begin{document}

\begin{abstract}
With an assumption on the codimension of the singular locus of a complex hypersurface $D$ in smooth variety $X$, we show that if $\underline{\Omega}^m_D \cong \Omega^m_D$, then $\underline{\Omega}^i_D \cong \Omega^i_D$ for all $0 \leq i \leq m$. We also discuss an analogue of this statement in positive characteristic. 
\end{abstract}

\subjclass[2020]{14B05, 14F10, 32S35}   
\keywords{Higher Du Bois singularities}
\maketitle

\setcounter{tocdepth}{2}
\tableofcontents

\section{Introduction}

Recent years have seen significant interest in the Hodge-theoretic properties of complex singularities. A central role in this study is played by the Deligne-Du Bois complexes $\DDB^i_X \in D^b_{\rm coh}(X)$. Recall that when $X$ is smooth, we have $\DDB^i_X=\Omega_X^i[0]$, but when $X$ is singular, these complexes provide the correct replacement for the sheaves of K\"{a}hler differentials, while encoding in a subtle way the singularities. For example, for a singular projective variety $X$ we have a non-canonical isomorphism
\[
H^m(X,\bC) \cong \bigoplus_{i+j=k} H^j(X, \DDB^i_X),
\]
which may serve as an analogue of Hodge decomposition. Similarly, if $L$ is an ample line bundle, then we have 
$$H^q(X,\DDB^p_X\otimes_{\cO_X}L)=0\quad\text{for}\quad p+q>{\rm dim}(X),$$
extending the classical Kodaira-Akizuki-Nakano vanishing theorem.

Using Deligne Du-Bois complexes, one defines \emph{$m$-Du Bois} singularities by the following conditions (see \cite{MOPW23,JKSY22,SVV}):\\[-1em]
\begin{enumerate}
\item ${\rm codim}_X{\rm Sing}(X) \geq 2m+1$, and \\[-0.7em]
\item $\DDB^i_X \cong \Omega^{[i]}_X$ for all $0 \leq i \leq m$.\\[-0.7em]
\end{enumerate}
The study of such singularities has  attracted significant attention, see \cite{MOPW23,JKSY22,Friedman-Laza1,Friedman-Laza2,SVV,popa2024injectivityvanishingdubois,Mustata-Popa22,kawakami2025inversionadjunctionhigherrational,CDM23k-rational-complete-intersection, PP24}, with particular interest devoted to the hypersurface case.  In this case (or more generally, for locally complete intersection singularities),  $X$ is $m$-Du Bois exactly when the canonical morphism $\Omega^i_X \to \DDB^i_X$ is an isomorphism for all $0 \leq i \leq m$. 

Motivated by these developments, it is natural to ask whether the Deligne-Du~Bois complexes exhibit a coherent structural behavior in the hypersurface setting. 
In what follows, assuming a suitable codimension condition on the singular locus of a hypersurface 
$D \subseteq X$ in a smooth variety $X$ over $\bC$, we show that if $\DDB^m_D \cong \Omega^m_D$ for some integer $m$, 
then $\DDB^i_D \cong \Omega^i_D$ for all $0 \leq i \leq m$. 
While this result is of independent interest from the Hodge-theoretic perspective originating in the work of 
Steenbrink and Deligne, the proof relies essentially on recent advances in the theory of higher singularities, 
such as \cite{SVV,popa2024injectivityvanishingdubois,kawakami2025inversionadjunctionhigherrational}. The following is our main result.

\begin{theorem} \label{thm:mainchar0}
Let $D \subseteq X$ be a normal hypersurface in a smooth variety $X$ over $\bC$  and let $m>0$ be an integer. If $m \leq {\rm codim}_D\, {\rm Sing}(D)  -2$ and $\DDB^m_D \cong \Omega^{m}_D$, then
\[
\DDB^i_D \cong \Omega^{i}_D
\]
for every $0 \leq i \leq m$.  
\end{theorem}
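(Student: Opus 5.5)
We argue by descending induction on $i$: it suffices to show that if $0<i\le m$ and $\DDB^i_D\cong\Omega^i_D$, then $\DDB^{i-1}_D\cong\Omega^{i-1}_D$. The codimension hypothesis is inherited, since $i-1<i\le m\le {\rm codim}_D\,{\rm Sing}(D)-2$. We first upgrade the abstract isomorphism to the canonical one. Because $m\le{\rm codim}_D{\rm Sing}(D)-2$, both complexes agree with $\Omega^i_D$ on the smooth locus $U$, and $\Omega^i_D$ is reflexive in degrees $i\le{\rm codim}-2$ by the depth properties of Kähler differentials of hypersurfaces (Greuel, Vetter). Hence $\mathcal H^0$ of any isomorphism restricted to $U$ extends uniquely. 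After rescaling, we may therefore assume the canonical map $\Omega^i_D\to\DDB^i_D$ is an isomorphism.

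The key link between consecutive degrees is the exact sequence of a hypersurface. With $\mathcal I=\cO_X(-D)$, there is an exact sequence
\[
0\to \Omega^{i-1}_D\otimes\cO_D(-D)\to \Omega^i_X|_D\to \Omega^i_D\to 0 .
\]
Left exactness holds when $i\le{\rm codim}_D{\rm Sing}(D)$ (torsion-freeness, Greuel), which our bound guarantees. On the Du Bois side we use the analogous exact triangle
\[
\DDB^{i-1}_D\otimes\cO_D(-D)\to \Omega^i_X|_D\to \DDB^i_D\xrightarrow{+1}
\]
or an adjunction-type comparison. Here we invoke the inversion of adjunction and injectivity results of \cite{kawakami2025inversionadjunctionhigherrational,popa2024injectivityvanishingdubois} and the characterization of \cite{SVV}: $D$ is $k$-Du Bois iff the natural maps $\Omega^p_X(\log D)(-D)\to$ (suitable pushforward from a log resolution) behave well for $p\le k$. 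Alternatively we use the characterization through the minimal exponent $\tilde\alpha_D\ge k+1$, proved via the Hodge filtration on $\cO_X(*D)$.

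The cleanest route is likely the following. In terms of the Hodge filtration, the condition $\DDB^i_D\cong\Omega^i_D$ for a single $i$ should be expressed as an equality $F_i\cO_X(*D)$-type statement, or the vanishing $\mathcal H^j(\DDB^i_D)=0$ for $j>0$. We then propagate this downward. Using the triangle above and that $\Omega^i_X|_D$ is a vector bundle, the isomorphism in degree $i$ forces $\DDB^{i-1}_D\otimes\cO_D(-D)$ to be quasi-isomorphic to the kernel $\Omega^{i-1}_D(-D)$. Tensoring with the line bundle $\cO_D(D)$ then gives the claim.

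The main obstacle is producing the triangle (or an equivalent comparison) for $\DDB^\bullet_D$. Unlike Kähler differentials, Du Bois complexes do not a priori satisfy a conormal sequence. I would first try to build it from a log resolution $\pi\colon Y\to X$ of $(X,D)$ with $E=\pi^*D_{\rm red}$. One writes $\DDB^p_D$ via the cone of $\mathbf R\pi_*\Omega^p_Y(\log E)(-E)\to\Omega^p_X$, as in Steenbrink's description of $\DDB^p_D$ for divisors, and uses the residue sequences of log forms. If this fails, the fallback is to use the result of \cite{popa2024injectivityvanishingdubois}: the natural maps $\mathcal H^j(\DDB^{p-1})\to\mathcal H^j(\DDB^p)\otimes(\cdots)$ are injective, which exactly transports vanishing of higher cohomology from degree $i$ to degree $i-1$. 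The codimension bound is needed there as well, to control $\mathcal H^0$ via reflexivity.
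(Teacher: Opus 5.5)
\textbf{The gap is the comparison triangle.} Your argument depends entirely on an exact triangle $\DDB^{i-1}_D\otimes\cO_D(-D)\to\Omega^i_X|_D\to\DDB^i_D\xrightarrow{+1}$ compatible with the K\"ahler sequence. You do not construct it; you call it the main obstacle. In fact it is false in general, even when $i\le{\rm codim}_D\,{\rm Sing}(D)-2$.

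Take the cone $D=\{x_0^5+x_1^5+x_2^5+x_3^5=0\}\subset\bC^4$ over a smooth quintic surface $V\subset\mathbb P^3$, with $d=3$, an isolated singularity, and $i=1$. Since $\Omega^1_X|_D$ is a sheaf, your triangle would force $\cH^1(\DDB^1_D)\cong\cH^2(\DDB^0_D)\otimes\cO_D(-D)$.

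Compute on the blow-up $f\colon\widetilde D\to D$ of the vertex. This is the total space of $\cO_V(-1)$, with exceptional divisor $E\cong V$, and one has $\DDB^1_D\cong Rf_*\Omega^1_{\widetilde D}(\log E)(-E)$ and $\cH^2(\DDB^0_D)\cong R^2f_*\cO_{\widetilde D}(-E)$.
\begin{itemize}
\item The stalk of $\cH^2(\DDB^0_D)$ at the vertex is $\bigoplus_{k\ge1}H^2(V,\cO_V(k))\cong\bC$.
\item The stalk of $\cH^1(\DDB^1_D)$ at the vertex has degree-one graded piece $\ker\bigl(H^2(V,\cO_V(-4))\to H^2(V,\cO_V)^{\oplus 4}\bigr)$, the map being given by the partials of the quintic. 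This piece has dimension at least $55-16$.
\end{itemize}
So the two sides cannot be isomorphic.

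Moreover, under your hypothesis the existence of such a compatible triangle at level $i$ is equivalent to the conclusion at level $i-1$, so invoking it is circular. Your fallbacks do not close the gap. The injectivity you attribute to \cite{popa2024injectivityvanishingdubois} is not available in the form you state it; the injectivity input actually used (Theorem~\ref{thm:SplitKW}) compares $\cH^0(\DDB^i_D)$ with $\DDB^i_D$ at a fixed level. The minimal-exponent criterion characterizes $k$-Du Bois, i.e.\ all levels $\le k$ at once, so it cannot use a hypothesis at the single level $m$, which is exactly what is new here.

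A minor point: ``rescaling'' an automorphism of $\Omega^i_U$ is meaningless. The correct upgrade is that an abstract isomorphism kills $\cH^{>0}(\DDB^i_D)$; then torsion-freeness of $\cH^0(\DDB^i_D)$ and reflexivity of $\Omega^i_D$ make the canonical map an isomorphism. In the paper the hypothesis is stated for the canonical map anyway.

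\textbf{The missing idea: a one-sided comparison suffices.} Set $K:={\rm Cocone}(\Omega^i_X|_D\to\DDB^i_D)$. The Koszul sequence gives $\gamma\colon\Omega^{i-1}_D\to K$. The key step, Proposition~\ref{prop:keydiagramchar0}, shows that $\gamma$ factors as $\Omega^{i-1}_D\xrightarrow{\rm can}\DDB^{i-1}_D\xrightarrow{\underline\gamma}K$. Its construction uses the description $\DDB^{i-1}_D\cong{\rm Cone}\bigl(Rf_*\Omega^{i-1}_Y(\log E)(-E)\to\Omega^{i-1}_X\bigr)$ that you mention, together with two further inputs:
\begin{itemize}
\item a splitting of $\Omega^i_X\to Rf_*\Omega^i_Y$ from the decomposition theorem (Lemma~\ref{lem:splitDT}), needed to map the resolution-side triangle back to $K$;
\item the local computation (Lemma~\ref{lem:MirceaCalculation}) that $\Omega^{i-1}_Y(\log E)(-E)\wedge d(g\circ f)$ vanishes in $\Omega^i_Y|_{f^*D}$. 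This makes the composite $Rf_*\Omega^{i-1}_Y(\log E)(-E)\to\Omega^{i-1}_X\to K$ zero, so the map descends to the cone $\DDB^{i-1}_D$.
\end{itemize}

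A mere map cannot transport an isomorphism, so the induction statement must also be weakened. One propagates injectivity of $H^{d-i}_\m(\Omega^i_D)\to H^{d-i}_\m(\DDB^i_D)$ down to level $i-1$. This goes by a chase in the local cohomology sequences of the two triangles, using $H^{d-i}_\m(\Omega^i_X|_D)=0$ and the factorization. Only at the end are these injectivities turned into $\DDB^i_D\cong\Omega^i_D$ for all $i\le m$, via the vanishing and reflexivity of Proposition~\ref{prop:hypesurfaceVanishing} and the criterion of Theorem~\ref{thm:SplitKW}.
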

\noindent In particular, $D$ is $m$-Du-Bois if and only if ${\rm codim}_D\, {\rm Sing}(D) \geq 2m+1$ and $\DDB^m_D \cong \Omega^{m}_D$. We refer to Theorem \ref{thm:mainStrongest} for the strongest variant of the above theorem.\\

Next, we turn to the arithmetic setting. 
Recently, Kawakami and the second author introduced higher $F$-injective singularities, 
which serve as positive characteristic counterparts of higher Du~Bois singularities.
\begin{definition} \label{def:mFinjective}
Let $D \subseteq X$ be a normal hypersurface of dimension $d$ in a smooth variety $X$ over a perfect field $k$ of characteristic $p>0$, and let $m \geq 0$ be a fixed integer. We say that $D$ is \emph{$m$-$F$-injective} if ${\rm codim}_D\, {\rm Sing}(D) \geq 2m+1$ and
\[
C^{-1} \colon H^{i}_\m(\Omega^{j}_D) \to H^{i}_\m\left(\frac{F_*\Omega^{j}_D}{B\Omega^{j}_D}\right) 
\]
is injective at every closed point $\m \in D$ whenever $j \leq m$ and $i+j \leq d$.
\end{definition}
\noindent We refer to {Section} \ref{ss:Cartier} for the definition of $C^{-1}$, and to the appendix for the discussion on how the above definition relates to the one in \cite{KW24}. Also, let us point out that for hypersurfaces with {small singular locus}, $H^i_\m(\Omega^j_D)=0$ when $i+j < d$ (see Proposition \ref{prop:hypesurfaceVanishing}), and so, in the presence of the first condition in the above definition, the second condition is only interesting for $i+j=d$.


The following result is the analogue of Theorem~\ref{thm:mainchar0} in positive characteristic. 
\begin{theorem} \label{thm:maincharp}
Let $D \subseteq X$ be a normal hypersurface of dimension $d$ in a smooth variety $X$ over a perfect field $k$ {of characteristic $p>0$}, and let $m>0$ be a fixed integer. If $m \leq {\rm codim}_D\, {\rm Sing}(D)-2$ and 
\begin{equation} \label{eq:Cconditionintro}
C^{-1} \colon H^{d-m}_\m(\Omega^{m}_D) \to H^{d-m}_\m\left(\frac{F_*\Omega^{m}_D}{B\Omega^{m}_D}\right) 
\end{equation}
is injective at every closed point $\m \in D$,  then
\[
C^{-1} \colon H^{i}_\m(\Omega^{j}_D) \to H^{i}_\m\left(\frac{F_*\Omega^{j}_D}{B\Omega^{j}_D}\right) 
\]
is injective at every closed point $\m \in D$ whenever $j \leq m$ and $i+j \leq d$.
\end{theorem}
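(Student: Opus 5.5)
Since $m \leq {\rm codim}_D\,{\rm Sing}(D)-2$, Proposition \ref{prop:hypesurfaceVanishing} gives $H^i_\m(\Omega^j_D)=0$ for $i+j<d$ and $j\le m$. So only the case $i=d-j$ needs checking. I will prove by descending induction on $j$, from $j=m$ down to $j=0$, that
\[
C^{-1}\colon H^{d-j}_\m(\Omega^j_D)\to H^{d-j}_\m(F_*\Omega^j_D/B\Omega^j_D)
\]
is injective. The case $j=m$ is the hypothesis. For the inductive step I will use the conormal sequence of the hypersurface. Since $D$ is normal and ${\rm Sing}(D)$ has codimension at least $j+2$, the sequence
\[
0\to \Omega^{j-1}_D\otimes\cO_D(-D)\xrightarrow{\ df\wedge\ } \Omega^j_X|_D\to \Omega^j_D\to 0
\]
is exact for $j\le m$. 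More precisely, the Koszul-type complex built from $df\wedge$ is exact in the needed range, because its homology is supported on ${\rm Sing}(D)$ and the depth bounds hold there. The middle term is locally free, so $H^{i}_\m(\Omega^j_X|_D)=0$ for $i<d$. Hence the connecting map
\[
\delta\colon H^{d-j}_\m(\Omega^j_D)\to H^{d-j+1}_\m(\Omega^{j-1}_D)
\]
is injective (working locally, so that $\cO_D(-D)\cong\cO_D$). It is in fact an isomorphism onto the kernel of $H^{d-j+1}_\m(\Omega^{j-1}_D)\to H^{d-j+1}_\m(\Omega^j_X|_D)$.

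The core of the argument is a compatible sequence for $F_*\Omega^\bullet/B\Omega^\bullet$ and compatibility of $C^{-1}$ with the two connecting maps. The inverse Cartier operator sends $f^{p-1}df\wedge F^*\omega$ to $df\wedge\omega$ in the appropriate sense. Multiplication by $df$ on the target side corresponds, under $C^{-1}$, to $f^{p-1}df$, and this twist is the source of the complication. I expect to obtain a commutative square
\[
\begin{array}{ccc}
H^{d-j}_\m(\Omega^j_D) & \xrightarrow{\delta} & H^{d-j+1}_\m(\Omega^{j-1}_D)\\
\downarrow C^{-1} & & \downarrow C^{-1}\\
H^{d-j}_\m(F_*\Omega^j_D/B\Omega^j_D) & \xrightarrow{\delta'} & H^{d-j+1}_\m(F_*\Omega^{j-1}_D/B\Omega^{j-1}_D),
\end{array}
\]
possibly with a factor of $f^{p-1}$ inserted. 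This square alone does not suffice: it would transfer injectivity from $j-1$ to $j$, which is the opposite of the direction we need.

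For the direction we need, I will dualize. By local duality, injectivity of $C^{-1}$ on $H^{d-j}_\m(\Omega^j_D)$ is equivalent to surjectivity of the Cartier trace map on the dual side. Alternatively, I will use the alternative characterization from the appendix: $C^{-1}$ is injective on these groups if and only if the natural map $H^{d-j}_\m(\Omega^j_D)\to H^{d-j}_\m(F_*\Omega^j_D/B\Omega^j_D)$ splits suitably, which relates to the complexes $Z\Omega$ and $B\Omega$. Then I will run the induction as follows. Wedging with $df$ gives $\Omega^{j-1}_D\to\Omega^j_X|_D$, and the resulting map $H^{d-j+1}_\m(\Omega^{j-1}_D)\hookleftarrow H^{d-j}_\m(\Omega^j_D)$ must be compared with the injectivity at level $j$. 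The key point is that an element of $\ker C^{-1}$ in degree $j-1$ can be lifted to degree $j$ through $\delta$. This requires showing that $\ker C^{-1}$ at level $j-1$ lies in the image of $\delta$, that is, that it dies in $H^{d-j+1}_\m(\Omega^j_X|_D)$. That in turn should follow from injectivity of $C^{-1}$ on the smooth ambient space, namely on $\Omega^j_X|_D$, where Cartier theory on $X$ is an isomorphism. I expect this lifting step to be the main obstacle. It requires a careful diagram chase that also involves $B\Omega^j_D$ and the exact sequences $0\to B\to Z\to\Omega\to0$, together with vanishing of the extra local cohomology groups $H^{d-j}_\m(B\Omega^{j}_D)$. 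I will obtain that vanishing inductively from the depth bounds in Proposition \ref{prop:hypesurfaceVanishing} and the codimension hypothesis.
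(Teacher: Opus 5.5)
Your setup (reduction to $i+j=d$ via Proposition \ref{prop:hypesurfaceVanishing}, descending induction on $j$, injectivity of $\delta$ from Lemma \ref{LemmaKoszul}) matches the paper. However, the inductive step itself is missing, and the ingredient that makes it work is the square you set aside.

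As you drew it, that square does not exist. The row $G\Omega^{j-1}_D\xrightarrow{\sigma}G\Omega^j_X|_D\xrightarrow{{\rm res}}G\Omega^j_D$, with $\sigma$ induced by $\wedge f^{p-1}df$, is only a complex, so there is no connecting map $\delta'$ into $H^{d-j+1}_\m(G\Omega^{j-1}_D)$. The paper replaces $G\Omega^{j-1}_D$ by $K:=\ker({\rm res}\colon G\Omega^j_X|_D\to G\Omega^j_D)$. This gives a map of short exact sequences from $0\to\Omega^{j-1}_D\to\Omega^j_X|_D\to\Omega^j_D\to0$ to $0\to K\to G\Omega^j_X|_D\to G\Omega^j_D\to0$. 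Its left vertical arrow $\theta$ factors as $\sigma\circ C^{-1}$ (Lemma \ref{lem:charpDiagram}; one must check that $\wedge f^{p-1}df$ descends modulo $f$, $df$ and boundaries). The connecting map $\delta_K\colon H^{d-j}_\m(G\Omega^j_D)\to H^{d-j+1}_\m(K)$ is injective because $G\Omega^j_X|_D$ is locally free on the Cohen--Macaulay $D$ and $j\geq1$.

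Your objection that such a square only transfers injectivity from $j-1$ to $j$ overlooks the socle part of Proposition \ref{prop:hypesurfaceVanishing}. There $\delta$ is an injective map between one-dimensional socles, hence an isomorphism ${\rm Soc}\,H^{d-j}_\m(\Omega^j_D)\cong{\rm Soc}\,H^{d-j+1}_\m(\Omega^{j-1}_D)$. So every nonzero socle element is $x=\delta(y)$ with $y\neq0$, and $\sigma(C^{-1}x)=\theta(\delta y)=\pm\delta_K(C^{-1}y)\neq0$ by the inductive hypothesis. Injectivity on socles suffices for these $\m^\infty$-torsion modules. That is the paper's whole proof.

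The substitutes you propose do not work.
\begin{itemize}
\item The appendix contains no splitting criterion for injectivity of $C^{-1}$. It only proves reflexivity of $B\Omega^j_D$ and $Z\Omega^j_D$, and vanishing of their $H^i_\m$ for $i+j<d$.
\item The vanishing of $H^{d-j}_\m(B\Omega^j_D)$ is not a depth statement at all. For $j=1$, the sequence $0\to\cO_D\xrightarrow{F}F_*\cO_D\to B\Omega^1_D\to0$ gives $H^{d-1}_\m(B\Omega^1_D)\cong\ker\bigl(F\colon H^d_\m(\cO_D)\to H^d_\m(F_*\cO_D)\bigr)$. So this vanishing is exactly $F$-injectivity, i.e.\ the $j=0$ case of the conclusion. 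It fails for non-$F$-injective isolated hypersurface singularities of dimension $\geq3$, which satisfy the codimension hypothesis.
\end{itemize}

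Your lifting idea, on the other hand, can be completed. Since $C^{-1}\colon\Omega^j_X|_D\to G\Omega^j_X|_D$ has locally free cokernel $B\Omega^{j+1}_X|_D$, it is injective on $H^{d-j+1}_\m$. Together with $C^{-1}\circ(\wedge df)=\sigma\circ C^{-1}$, this puts $\ker C^{-1}$ inside ${\rm im}(\delta)$. But to pass from $C^{-1}(\delta y)=0$ to $C^{-1}(y)=0$ you need exactly the $K$-square with injective $\delta_K$. With it, the lifting argument closes, and it even avoids the socle computation.
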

\noindent Thus, $D$ is $m$-$F$-injective if and only if ${\rm codim}_D\,  {\rm Sing}(D) \geq 2m+1$ and (\ref{eq:Cconditionintro})  is injective at all closed points of $D$. 

Let us note that this theorem was the first result we proved. 
Motivated by the arithmetic setting, we subsequently established the characteristic zero theorem, 
which is somewhat more involved (see Remark \ref{remark:Char0vsCharpProof}).

\subsection{Acknowlegements}
The authors thank Brad Dirks, Eamon Quinlan-Gallego,  Tatsuro Kawakami, Theo Sandstrom, and Kevin Tucker for valuable conversations related to the content of the paper.
{\mustata } was supported by NSF grant DMS-2301463 and by the Simons Collaboration grant \emph{Moduli of
Varieties}.
Witaszek was supported by NSF research grants DMS-2101897 and DMS-2401360.

\section{Preliminaries} \label{ss:preliminaries}
We start by summarising some basic definitions.
\begin{enumerate}
\setlength\itemsep{0.3em}
    \item A \emph{variety} over a field $k$ is a {reduced} scheme, which is separated and of finite type over $k$. The dimension of a variety $X$ is the maximum over all the dimensions of $\cO_{X,x}$ for {the} closed points $x \in X$.
    \item A \emph{log resolution} $f \colon Y \to X$ of a normal variety $X$ is a projective birational morphism such that $Y$ is regular and the exceptional divisor $E$ has simple normal crossings (cf.\ \cite[Tag 0BI9]{stacks-project}).
    \item We let $\DDB^i_X \in D^b_{\rm coh}(X)$ denote the $i$-th Du Bois complex of a variety $X$ over $\mathbb{C}$. When $X$ is smooth, we have $\DDB^i_X=\Omega^i_X[0]$. In general, it is given by
\[
\DDB^i_X \coloneqq R\epsilon_{\kdot,*}\Omega^i_{X_\kdot},
\]
where $\epsilon_\kdot \colon X_{\kdot}\to X$ is a hyperresolution (see \cite[Chapter~5]{GNPP} or \cite[Chapter~7.3]{Peter-Steenbrink(Book)} for details). Equivalently, $\DDB^i_X$ is the derived $h$-sheafification of $\Omega^i$ on the $h$-site over $X$. By \cite[Proposition 4.2 (i)]{Huber-Jorder}, $\cH^0(\DDB^i_X)$ is torsion-free. We always have a canonical morphism $\Omega^i_X \to \DDB^i_X$. Whenever we write $\Omega_X^i\simeq\underline{\Omega}_X^i$ in this paper, what we mean is that this canonical map is an isomorphism. 
\end{enumerate}
\vspace{0.5em}
We start with an easy criterion for when a coherent sheaf is reflexive.
\begin{lemma} \label{lem:reflexive}
Let $X = \Spec R$ be a normal affine variety over a field $k$ and let $\cM$ be a coherent sheaf on $X$. Let $U \subseteq X$ be an open subset such that $\cM|_U$ is locally free and $Z := X\smallsetminus U$ is of codimension at least two. If
\[
H^i_\m(\cM) = 0 
\]
for all closed points $\m \in  Z$ and $i \leq \dim Z+1$, then $\cM$ is reflexive.
\end{lemma}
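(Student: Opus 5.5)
Since $X$ is normal and $\cM|_U$ is locally free, the standard criterion reduces reflexivity to depth: $\cM$ is reflexive if and only if the natural map $\cM \to j_*(\cM|_U)$ is an isomorphism, where $j\colon U \hookrightarrow X$ is the inclusion. Indeed, $j_*(\cM|_U)$ is reflexive because $\cM|_U$ is locally free and $\operatorname{codim} Z \geq 2$. Equivalently, the plan is to show
\[
\mathcal{H}^0_Z(\cM) = 0 \quad\text{and}\quad \mathcal{H}^1_Z(\cM) = 0,
\]
which follows from the exact sequence $0 \to \mathcal{H}^0_Z(\cM) \to \cM \to j_*j^*\cM \to \mathcal{H}^1_Z(\cM) \to 0$. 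It therefore suffices to prove that $H^0_Z(\cM)$ and $H^1_Z(\cM)$ vanish after localizing at every point $\p \in Z$. This is the statement $\operatorname{depth} \cM_\p \geq 2$ for all $\p \in Z$: the sheaf $\cM$ satisfies Serre's condition $S_2$ along $Z$ and is free off $Z$.

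The hypothesis is only about closed points, so we must pass from local cohomology at closed points to local cohomology at all points of $Z$. For this, I would use the standard comparison between depth at a prime and depth at the closed points specializing it. For a finitely generated module over a finitely generated $k$-algebra and a prime $\p$ with closure of dimension $e = \dim V(\p)$, we have
\[
\operatorname{depth} \cM_{\m} \leq \operatorname{depth} \cM_\p + e
\]
for every closed point $\m \in V(\p)$. Taking $\m$ a closed point of $V(\p) \subseteq Z$, the hypothesis gives $H^i_\m(\cM) = 0$ for $i \leq \dim Z + 1$, hence $\operatorname{depth}\cM_\m \geq \dim Z + 2$. Note that $\cM_\m \neq 0$, since $\cM$ is generically nonzero, assuming $\cM$ has full support; otherwise $\cM_\m = 0$ forces $\cM = 0$ near $\m$, which is harmless. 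We then get
\[
\operatorname{depth}\cM_\p \geq \dim Z + 2 - e \geq 2,
\]
because $e \leq \dim Z$. The depth inequality itself is a standard consequence of the fact that localization does not drop depth by more than the dimension of the quotient, as in EGA IV 6.11 or Bruns--Herzog, applied to $\dim R_\m/\p R_\m = e$, which holds for affine domains over a field.

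Combining the two steps, $\operatorname{depth}\cM_\p \geq 2$ for all $\p \in Z$, so the local cohomology sheaves $\mathcal{H}^0_Z(\cM)$ and $\mathcal{H}^1_Z(\cM)$ vanish. One way to see this is the Grothendieck vanishing criterion $\mathcal{H}^i_Z(\cM) = 0$ for $i < \inf_{\p \in Z} \operatorname{depth} \cM_\p$. Hence $\cM \cong j_*(\cM|_U)$, which is reflexive. The only genuinely nontrivial input, and the step I expect to be the main obstacle, is the depth comparison between a non-closed point and closed points. This is where the bound $\dim Z + 1$ rather than $1$ is needed; everything else is the formal $S_2$ criterion for reflexivity on a normal variety.
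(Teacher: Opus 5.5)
Your proof is correct and follows the same structure as the paper's. You reduce reflexivity, given local freeness off a codimension-two set on a normal variety, to $\operatorname{depth}\cM_\p \geq 2$ for all $\p \in Z$. You then transfer the vanishing hypothesis from closed points to arbitrary points of $Z$ via $\operatorname{depth}\cM_\p \geq \dim Z + 2 - \dim\overline{\p}$.

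The only difference is in how that last bound is justified. You cite the standard inequality $\operatorname{depth} M_\m \leq \operatorname{depth} M_\p + \dim R_\m/\p R_\m$. The paper proves exactly this instance by local duality: the vanishing of $\cExt^{-i}_{\cO_X}(\cM,\omega^\bullet_X)$ at closed points of $Z$ spreads to all points of $Z$, and localizing at $\p$ shifts the dualizing complex by $\dim\overline{\p}$.
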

\begin{proof}  
Recall that $\cM$ is reflexive (that is, the canonical map $\cM \to \cM^{**}$ is an isomorphism) if and only if for every, not necessarily closed, point $\p\in X$ the following conditions hold:
\begin{enumerate}
\item[i)] If $\dim(R_{\p})\leq 1$, then ${\cM}_{\p}$ is a free $R_{\p}$-module, and
\item[ii)] If $\dim(R_{\p})\geq 2$, then ${\rm depth}({\cM}_{\p})\geq 2$
\end{enumerate}
(see \cite[Proposition~1.4.1]{BrunsHerzog}). In our setting,
since ${\rm codim}_X\, Z \geq 2$ and ${\cM}|_U$ is locally free, the condition in i) is satisfied, and for the condition in ii), we may assume that $\p\in Z$.

We claim that (cf.\  \cite[Remark 2.5]{kawakami2025inversionadjunctionhigherrational})
\begin{equation} \label{eq:reflexive-claim}
H^i_\p(\cM_{\p}) = 0
\end{equation}
for all $i \leq \dim Z+1 - \dim \overline{\p}$, where ${\overline{\p}}$ denotes the closure of $\p$. To this end, we denote the Matlis duality functor by $(-)^{\vee}$. By local duality and our assumptions, we get
\[
0 = H^i_\m(\cM_\m) = \cExt^{-i}_{\cO_X}(\cM, \omega^\bullet_X)_\m^\vee
\]
for all closed points $\m \in Z$ and $i \leq \dim Z + 1$. In particular,
\[
\cExt^{-i}_{\cO_X}(\cM, \omega^\bullet_X)_\p = 0
\]
for all points $\p \in Z$ and $i \leq \dim Z + 1$. Applying local duality on $R_\p$ then yields
\begin{align*}
H^i_\p(\cM_\p) &\cong \cExt^{- i}_{\cO_{X,\p}}(\cM_\p, \omega^\bullet_{\cO_{X,\p}})^\vee \\
&\cong  \cExt^{- i}_{\cO_X}(\cM, \omega^\bullet_{X}[-\dim \overline{\p}])^\vee_\p \\
&\cong \cExt^{- (i +\dim \overline{\p})}_{\cO_X}(\cM, \omega^\bullet_{X})^\vee_\p \\
&=0
\end{align*}
as long as $i +\dim \overline{\p} \leq \dim Z+1$, which concludes the proof of Claim (\ref{eq:reflexive-claim}).

In particular, the claim gives that 
\[
H^0_\p(\cM_{\p}) = H^1_\p(\cM_{\p}) = 0,
\]
which implies ${\rm depth}({\cM}_{\p})\geq 2$ as required.
\end{proof}
Next, we recall {the} Koszul resolution.
\begin{lemma} \label{LemmaKoszul}
Let $D$ be a hypersurface in a smooth affine variety $X = \Spec R$ defined over a field $k$. Assume that $D = V(f)$, for $f \in R$. For every integer $m$, with $0 < m \leq {\rm codim}_D\, {\rm Sing}(D)$, the natural complex
$$
0 \to \cO_D \xrightarrow{\wedge df} \Omega^1_X|_D \xrightarrow{\wedge df} \ldots \xrightarrow{\wedge df} \Omega^{m}_X|_D \xrightarrow{\rm res} \Omega^m_D \to 0 
$$
is exact. In particular, the complex
$$
0 \to \Omega^{m-1}_D \xrightarrow{\wedge df} \Omega^{m}_X|_D \xrightarrow{\rm res} \Omega^m_D \to 0 
$$
is exact as well.
\end{lemma}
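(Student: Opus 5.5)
The plan is to identify the complex with a Koszul complex truncated in low degrees, and to deduce exactness from the fact that $df$ is a regular section of $\Omega^1_X$ away from a set of large codimension. Set $n = \dim X$, so $\dim D = n-1$, and let $c = {\rm codim}_D\,{\rm Sing}(D)$. Since $X$ is smooth, we may shrink $X$ so that $\Omega^1_X$ is free with basis $dx_1,\dots,dx_n$. Then $df = \sum_j \partial_j f\, dx_j$.

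Consider the Koszul complex $K^\kdot$ on $\cO_D$ attached to the sequence $\bar\partial_1 f,\dots,\bar\partial_n f$. Here $K^i = \Omega^i_X|_D$, and the differential is $\wedge df$. The singular locus of $D$ is exactly $V(\partial_1 f,\dots,\partial_n f)\cap D$, because $D = V(f)$ is reduced, being normal. Hence the ideal $J = (\bar\partial_j f)\subseteq \cO_D$ defines a closed subset of codimension $c$ in $D$. We would like to say that $\cO_D$ is Cohen–Macaulay, which holds since it is a hypersurface ring. Then ${\rm depth}_J(\cO_D) = {\rm grade}(J) = {\rm ht}(J) \ge c$ locally.

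The standard depth-sensitivity of the Koszul complex then gives the vanishing of the Koszul cohomology $H^i(K^\kdot)$ for $i < {\rm grade}(J)$, written cohomologically. Concretely, $H^i(K^\kdot)\cong H_{n-i}$, and $H_{n-i}=0$ for $n-i > n-{\rm grade}$. Therefore the complex
\[
0\to \Omega^0_X|_D\to \Omega^1_X|_D\to\cdots\to\Omega^m_X|_D
\]
is exact at every spot $\Omega^i_X|_D$ with $i\le m-1$, as long as $m-1 < c$. This holds since $m\le c$.

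It remains to check exactness at $\Omega^m_X|_D$ together with the surjection onto $\Omega^m_D$. This amounts to the claim that the kernel of ${\rm res}\colon \Omega^m_X|_D\to\Omega^m_D$ equals the image of $\wedge df$ from $\Omega^{m-1}_X|_D$. This is the standard conormal presentation: $\Omega^m_D = \Omega^m_X/(f\Omega^m_X + df\wedge\Omega^{m-1}_X)$. It follows from the conormal sequence $(f)/(f^2)\xrightarrow{d}\Omega^1_X|_D\to\Omega^1_D\to 0$ by taking exterior powers, since $\wedge^m$ of a quotient $E/L$ by the image of a line bundle is $\wedge^m E/(L\wedge \wedge^{m-1}E)$. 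Surjectivity of ${\rm res}$ is clear.

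The second statement then follows by splitting off the left part of the complex. Exactness gives that the image of $\Omega^{m-2}_X|_D$ in $\Omega^{m-1}_X|_D$ is the kernel of $\wedge df$. The cokernel of $\Omega^{m-2}_X|_D\to\Omega^{m-1}_X|_D$ is $\Omega^{m-1}_D$, by the previous paragraph applied to $m-1$. Hence $\wedge df$ induces an injection $\Omega^{m-1}_D\hookrightarrow \Omega^m_X|_D$ whose image is $\ker({\rm res})$. For $m=1$, the term $\Omega^{0}_D=\cO_D$ is handled by the injectivity of $\cO_D\to\Omega^1_X|_D$, which is the $i=0$ case above.

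The main point requiring care is the depth-sensitivity step. Two things must be verified. First, the grade of $J$ on the Cohen–Macaulay ring $\cO_D$ equals the codimension of ${\rm Sing}(D)$ at every point; when ${\rm Sing}(D)$ is empty, $J$ is the unit ideal and the complex is split exact. Second, the index bookkeeping must give exactness exactly up to degree $m-1$ under the hypothesis $m\le c$.
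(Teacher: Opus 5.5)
Your argument is correct and is the paper's proof with the details filled in. The paper likewise gets exactness up to $\Omega^{m-1}_X|_D$ from depth sensitivity of the Koszul complex on $\bar\partial_1 f,\dots,\bar\partial_n f$ over the Cohen--Macaulay ring $\cO_D$ (citing Matsumura, Theorems 16.8 and 13.6), calls exactness at the last two terms an easy general fact (your presentation $\Omega^m_D=\Omega^m_X|_D/(df\wedge\Omega^{m-1}_X|_D)$ is that check), and deduces the short exact sequence from the long one.

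One small quibble: normality is not a hypothesis of this lemma, but you do not need it. The Jacobian criterion identifies ${\rm Sing}(D)$, read as the non-smooth locus, with $V(\bar\partial_1 f,\dots,\bar\partial_n f)\subseteq D$ without any reducedness assumption.
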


\begin{proof}
The exactness of 
$$0 \to \cO_D \xrightarrow{\wedge df} \Omega^1_X|_D \xrightarrow{\wedge df} \ldots \xrightarrow{\wedge df} \Omega^{m}_X|_D$$
is \cite[Theorem 16.8 and 13.6(i)(iii)]{Matsumura}. The exactness {at} the last two terms is always true and easy to verify.
Finally, the exactness of the short exact sequence is a direct consequence of the first exact sequence.
\end{proof}

\begin{proposition} \label{prop:hypesurfaceVanishing}
Let $D$ be a normal hypersurface of dimension $d$ in a smooth affine variety $X = \Spec R$ defined over a perfect field $k$. Suppose that $D = V(f)$ for $f \in R$. Then for every maximal ideal $\m \in {\rm Sing}(D)$ and $j \leq {\rm codim}_D\, {\rm Sing}(D)$ we have that
\begin{align*}
H^i_\m(\Omega^j_D) &= 0 &&\text{ if $i+j < d$} \\ 
{\rm Soc}(H^i_\m(\Omega^j_D)) &\cong {\rm Soc}(H^d_\m(\cO_D)) \cong {R/\m} &&\text{ if $i+j = d$}.
\end{align*}
In particular, $\Omega^j_D$ is reflexive when $j \leq {\rm codim}_D\, {\rm Sing}(D) - 2$.
\end{proposition}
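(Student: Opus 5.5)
We argue by induction on $j$, using the short exact sequences of Lemma \ref{LemmaKoszul}. Fix a maximal ideal $\m\in{\rm Sing}(D)$ and set $c={\rm codim}_D\,{\rm Sing}(D)$. Since $D$ is a hypersurface in the smooth variety $X$, the ring $\cO_{D,\m}$ is Cohen--Macaulay of dimension $d$ (counting dimensions at closed points, as in the convention of Section \ref{ss:preliminaries}). Hence $H^i_\m(\cO_D)=0$ for $i<d$. Moreover $H^d_\m(\cO_D)$ is the injective hull of $R/\m$ over the Gorenstein ring $\cO_{D,\m}$, so its socle is $R/\m$. This settles the case $j=0$. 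Since $\Omega^j_X|_D$ is a free $\cO_D$-module, the same two facts hold for each of its summands: $H^i_\m(\Omega^j_X|_D)=0$ for $i<d$.

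For $1\le j\le c$, Lemma \ref{LemmaKoszul} (applied with $m=j$) gives the exact sequence
\[
0 \to \Omega^{j-1}_D \xrightarrow{\wedge df} \Omega^{j}_X|_D \xrightarrow{\rm res} \Omega^j_D \to 0.
\]
Taking local cohomology, we obtain $H^i_\m(\Omega^j_D)\cong H^{i+1}_\m(\Omega^{j-1}_D)$ whenever $i+1<d$, because then both $H^i_\m(\Omega^j_X|_D)$ and $H^{i+1}_\m(\Omega^j_X|_D)$ vanish. When $i+j<d$ and $j\ge 1$ we have $i+1\le d-j<d$, so induction gives $H^i_\m(\Omega^j_D)\cong H^{i+1}_\m(\Omega^{j-1}_D)=0$, since $(i+1)+(j-1)<d$.

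For $i+j=d$ with $j\ge 2$, we again have $i+1=d-j+1<d$, so $H^i_\m(\Omega^j_D)\cong H^{i+1}_\m(\Omega^{j-1}_D)$ and the socle statement follows by induction from the case $j-1$.

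The main obstacle is the step $j=1$, $i=d-1$. Here the sequence only gives
\[
0\to H^{d-1}_\m(\Omega^1_D)\to H^d_\m(\cO_D)\xrightarrow{\wedge df} H^d_\m(\Omega^1_X|_D),
\]
where the first zero is $H^{d-1}_\m(\Omega^1_X|_D)$. So $H^{d-1}_\m(\Omega^1_D)$ is the kernel of $\wedge df$. In local coordinates, with $\Omega^1_X|_D\cong\cO_D^{\,d+1}$, this map is $\eta\mapsto(\partial_1 f\cdot\eta,\dots,\partial_{d+1}f\cdot\eta)$. Its kernel is therefore the annihilator in $H^d_\m(\cO_D)=E$ of the Jacobian ideal $J=(\partial_1 f,\dots,\partial_{d+1}f)\cO_{D,\m}$, which by Matlis duality is $\Hom(\cO_D/J,E)$. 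Since $\m\in{\rm Sing}(D)$, we have $J\subseteq\m$, so this module is nonzero and its socle equals ${\rm Soc}(E)=R/\m$, which is one-dimensional. Perfectness of $k$ is used to identify ${\rm Sing}(D)$ with the vanishing locus of $J$.

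Finally, for reflexivity when $j\le c-2$: $\Omega^j_D$ is locally free on $U=D\smallsetminus{\rm Sing}(D)$, and $Z={\rm Sing}(D)$ has codimension $c\ge 2$. We have $\dim Z+1=d-c+1\le d-j-1$, so every $i\le \dim Z+1$ satisfies $i+j<d$, and the vanishing proved above applies. Lemma \ref{lem:reflexive} then shows that $\Omega^j_D$ is reflexive.
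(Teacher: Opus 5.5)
Your proof is correct and follows essentially the same route as the paper: induction on $j$ via the Koszul short exact sequence, reduction of the socle statement to $j=1$ (where $H^{d-1}_\m(\Omega^1_D)$ is the kernel of wedging with $df$ on $H^d_\m(\cO_D)$, whose socle is all of ${\rm Soc}(H^d_\m(\cO_D))$ since the partials lie in $\m$), and then Lemma \ref{lem:reflexive} for reflexivity. Describing that kernel as the annihilator of the Jacobian ideal is just a repackaging of the paper's observation that $\theta$ kills the socle.
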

\noindent We do not calculate this cohomology when $i+j > d$. Also, recall that the socle ${\rm Soc}(M)$ of an $\m^\infty$-torsion module $M$ is defined as the $\m$-torsion submodule $M[\m]$. Finally, we note that the last assertion in the proposition
is due to Vetter, see \cite[Satz 4]{Vetter}.
\begin{proof}
First, we note that the ``in particular'' part follows from  Lemma \ref{lem:reflexive}, with $Z={\rm Sing}(D)$, and the vanishing in the statement of the proposition: 
\[
H^i_\m(\Omega^j_D) =0\quad\text{for}\quad i\leq d-1-({\rm codim}_D\, {\rm Sing}(D) - 2) = \dim {\rm Sing}(D) + 1.
\]
 Since $\Omega^j_X|_D$ is locally free and $D$ is Cohen-Macaulay as it is a hypersurface in a regular variety, we have that
\[
H^i_\m(\Omega^{j}_X|_D) = 0 \quad \text{ for } i<d.
\]

Pick integers $i,j \geq 0$ such that $i+j\leq d$ and $j \leq {\rm codim}_D\, {\rm Sing}(D)$. The case $j=0$ of the proposition is clear, and so we may assume that $j \geq 1$. Consider the short exact sequence from Lemma \ref{LemmaKoszul}:
\[
0 \to \Omega^{j-1}_D \xrightarrow{\wedge df} \Omega^{j}_X|_D \to \Omega^j_D \to 0.
\]
This induces a long exact sequence of local cohomology:
\[
H^i_\m(\Omega^{j}_X|_D) \to H^i_\m(\Omega^j_D) \to H^{i+1}_\m(\Omega^{j-1}_D) \to H^{i+1}_\m(\Omega^{j}_X|_D).\\[0.7em]
\]

\noindent \textbf{Case A: $i+j < d$}. In this situation, since $j \geq 1$, we have that $i < d-1$. In particular,
\[
H^i_\m(\Omega^j_X|_D) = H^{i+1}_\m(\Omega^{j}_X|_D) = 0,
\]
and so by the above long exact sequence:
\[
H^i_\m(\Omega^j_D) = H^{i+1}_\m(\Omega^{j-1}_D).
\]
Hence, by induction on $j$, we get that $H^i_\m(\Omega^j_D) = 0$ if $i +j < d$.\\

\noindent \textbf{Case B: $i+j = d$}. In this situation, the same argument as above shows that
\[
H^i_\m(\Omega^j_D) \cong H^{d-1}_\m(\Omega^1_D),
\]
hence the statement in the proposition reduces to $j=1$. In this case, we have the long exact sequence:
\[
0 \to H^{d-1}_\m(\Omega^1_D) \to H^{d}_\m(\cO_D) \xrightarrow{\theta} H^{d}_\m(\Omega^{1}_X|_D) \to H^d_\m(\Omega^1_D) \to 0,
\]
where $\theta$ is induced by wedging by $df$.

Up to localisation and choosing  algebraic coordinates $x_1,\ldots, x_n$ of $R$ at $\m$, we have a natural trivialisation $\Omega^1_X|_D \cong \cO^{\oplus n}_D$ for which the map $\theta$ is given by 
\[
\theta(\alpha) = \big((\partial_{x_1} f)\alpha, \ldots, (\partial_{x_n} f)\alpha\big).
\]
Since $D$ is Gorenstein, ${\rm Soc}(H^d_\m(\cO_D)) \cong R/\m$. Moreover, as $\m \in {\rm Sing}(D)$, we have that
\[
(\partial_{x_1}f, \ldots, \partial_{x_n}f) \subseteq \m.
\]
Hence $\theta({\rm Soc}(H^d_\m(\cO_D))) = 0$, and so by the above long exact sequence:
\[
{\rm Soc}(H^{d-1}_\m(\Omega^1_D)) \cong {\rm Soc}(H^d_\m(\cO_D)) \cong R/\m
\]
as required. 
\end{proof}
\begin{remark}
It will never be the case however that $H^i_\m(\Omega^j_D)$ is isomorphic to $H^d_\m(\cO_D)$ for $j>0$ and $i+j=d$ on the nose. Indeed, they are Matlis dual to the localisations of ${\rm Ext}_{{\mathcal O}_D}^{d-i}(\Omega^j_D,\omega_D)$ and $\omega_D$ at $\m$, respectively, with the former supported on ${\rm Sing}(D)$ {and} the latter being a line bundle on $D$.
\end{remark}


We shall also use the following result.
\begin{lemma} \label{lem:basic-sesOmega}
If $X$ is a smooth variety over $\bC$ and $E$ is a reduced simple normal crossing divisor
on $X$, then  
\[
\DDB^i_E \cong \Omega^i_E/{\rm tors},
\]
where ${\rm tors}$ denotes the torsion part of $\Omega^i_E$, and {we have a} natural short exact sequence
\[
0 \to \Omega^i_X(\log E)(-E) \to \Omega^i_X \xrightarrow{\rm res} \Omega^i_E/{\rm tors} \to 0.
\]
\end{lemma}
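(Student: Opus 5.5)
The plan is to reduce everything to a local computation. The question is local on $X$, so we may assume $X$ is affine and that $E=E_1+\cdots+E_r$ is cut out by part of a system of local coordinates $x_1,\ldots,x_r$. For each subset $J\subseteq\{1,\ldots,r\}$ write $E_J=\bigcap_{j\in J}E_j$. These strata are smooth and form the natural semi-simplicial resolution $\epsilon_\kdot\colon E_\kdot\to E$, with $E_\kdot^{(k)}=\bigsqcup_{|J|=k+1}E_J$. This is a cubical hyperresolution of $E$ (cf.\ \cite[Chapter~5]{GNPP}, \cite[Chapter~7.3]{Peter-Steenbrink(Book)}). Hence
\[
\DDB^i_E\simeq\Big[\textstyle\bigoplus_j\Omega^i_{E_j}\to\bigoplus_{|J|=2}\Omega^i_{E_J}\to\cdots\Big],
\]
where the differentials are the alternating sums of the restriction maps, and all the terms are locally free on their smooth strata.

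\textbf{Step 1: the Čech-type complex is a resolution.} We show that this complex is exact in positive degrees. Working in coordinates, $\Omega^i_{E_J}$ is generated by forms in $dx_{r+1},\ldots,dx_n$ and $dx_k$ for $k\notin J$, with coefficients in $\cO_{E_J}$. The complex then splits as a tensor product of one-variable complexes $\cO_{E_j}\to\cO_{E_j\cap E_k}$, one for each coordinate. Alternatively, one can argue by induction on $r$ via the short exact sequence $0\to\cO_{E'}(-E_r)\to\cO_{E}\to\cO_{E_r}\to0$, with $E'=E_1+\cdots+E_{r-1}$, tensored with the relevant forms. Either way we get $\mathcal H^{>0}=0$. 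It follows that $\DDB^i_E\simeq\mathcal K^i:=\ker\big(\bigoplus_j\Omega^i_{E_j}\to\bigoplus_{j<k}\Omega^i_{E_j\cap E_k}\big)$, placed in degree $0$.

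\textbf{Step 2: identify $\mathcal K^i$ with $\Omega^i_E/{\rm tors}$ and compute the kernel.} Restriction gives a map $\Omega^i_X\to\mathcal K^i$, and we show it is surjective. This is again a local coordinate check: a compatible family of forms on the components $E_j$ lifts to a global form on $X$, by the standard gluing argument for functions on an snc union (the case $i=0$ is $\cO_E\to\ker(\oplus\cO_{E_j}\to\oplus\cO_{E_{jk}})$ being an isomorphism, since $E$ is reduced with the components meeting transversally), applied coefficientwise in the monomial basis.

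Next we compute the kernel of $\Omega^i_X\to\bigoplus_j\Omega^i_{E_j}$. A form restricts to zero on $E_j$ exactly when it lies in $x_j\Omega^i_X+dx_j\wedge\Omega^{i-1}_X$. Intersecting these conditions over all $j$ gives $\Omega^i_X(\log E)(-E)$, since $x_1\cdots x_r\cdot\Omega^i_X(\log E)$ is generated by the forms $x_{J^c}\,dx_J\wedge(\text{rest})$. This identification, done in the monomial basis, is the main computational obstacle; I would verify it by treating each basis monomial $dx_I$ separately and checking divisibility coefficientwise.

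\textbf{Step 3: conclude.} The image $\mathcal K^i\subseteq\bigoplus_j\Omega^i_{E_j}$ is torsion-free on $E$. The map $\Omega^i_E\to\mathcal K^i$ is an isomorphism on the smooth locus of $E$, which is dense. Its kernel is therefore torsion, and it equals the whole torsion subsheaf because $\mathcal K^i$ is torsion-free. This gives $\DDB^i_E\cong\Omega^i_E/{\rm tors}$, and the short exact sequence is precisely the one obtained in Step 2.
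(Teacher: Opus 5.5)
Your argument is correct, but it is not the paper's route. The paper gives no proof of its own here and simply quotes \cite[Corollary 2.16 and Corollary 2.17]{KW24}. You instead compute $\DDB^i_E$ directly from the standard resolution of $E$ by its strata $E_J$. Strictly speaking this is a semi-simplicial resolution rather than a cubical hyperresolution, but it is the classical one computing $\DDB^i_E$. You then reduce both the vanishing $\mathcal{H}^{>0}(\DDB^i_E)=0$ and the description of $\mathcal{H}^0$ to a coordinate computation, and finish with a clean torsion argument. Your route gives a self-contained proof whose only external input is cohomological descent for the strata resolution, and it describes the kernel $\Omega^i_X(\log E)(-E)$ completely explicitly. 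The citation buys brevity and keeps all the paper's facts about snc divisors tied to \cite{KW24}.

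The one spot to tidy is Step 1, where ``a tensor product of one-variable complexes $\cO_{E_j}\to\cO_{E_j\cap E_k}$'' does not quite parse. The clean formulation is the one you actually use in Step 2. In the basis $dx_I$ ($|I|=i$) one has $dx_I|_{E_J}=0$ whenever $J\cap I\neq\emptyset$. So the \v{C}ech-type complex is the direct sum over $I$ of $dx_I$ times the $\cO$-complex $\bigoplus_{j}\cO_{E_j}\to\bigoplus_{j<k}\cO_{E_j\cap E_k}\to\cdots$ of the smaller snc divisor $\sum_{j\in[1,r]\smallsetminus I}E_j$. Each summand is exact in positive degrees with $\mathcal{H}^0=\cO_X/\bigl(\prod_{j\in[1,r]\smallsetminus I}x_j\bigr)$; one can see this after completing, where the ideals $(x_j)$ generate a distributive lattice. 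This gives Step 1, and at the same time shows
\[
\mathcal{K}^i\cong\bigoplus_I\cO_X/\bigl(\textstyle\prod_{j\in[1,r]\smallsetminus I}x_j\bigr)\,dx_I\cong\Omega^i_X/\Omega^i_X(\log E)(-E).
\]
So Steps 1 and 2 collapse into a single computation.
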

\begin{proof}
See \cite[Corollary 2.16 and Corollary 2.17]{KW24}.
\end{proof}

\subsection{Triangulated categories}

In this subsection, we collect some easy lemmas about triangulated categories which will be needed later on. We refer to \cite{FM06} for an introduction to triangulated categories, and to \cite{HTT} and \cite{BBDG18} for a more detailed treatment of $t$-structures.


\begin{remark}
 Let $\mathcal T$ be a triangulated category. Throughout this article, we will repeatedly use the first axiom of triangulated categories: given a map $\phi \colon A \to B$, there exists an exact triangle
\begin{equation} \label{eq:extension}
A \xrightarrow{\phi} B \xrightarrow{\psi} C \xrightarrow{+1} A[1].
\end{equation}
We call $C$ the \emph{cone} of the map $\phi \colon A \to B$ and denote it by ${\rm Cone}(\phi \colon A \to B)$. Note that $C$ is determined up to an isomorphism by the map $\phi \colon A \to B$, but not always up to a unique isomorphism. Namely, given two cones $C$ and $C'$, the third axiom of triangulated categories shows that there exists a map of exact triangles:
\[
\begin{tikzcd}
A \ar{r}{\phi} \ar{d}{=} & B \ar{r}{\psi} \ar{d}{=} & C \ar[dash]{d}{\alpha} \ar{r}{+1} & A[1]. \ar{d}{=} \\
A \ar{r}{\phi} & B \ar{r}{\psi'} & C' \ar{r}{+1}  & A[1].
\end{tikzcd}
\]
Moreover, such a map $\alpha$ is forced to be a isomorphism. 
However, the choice of $\alpha$ is not unique. We also emphasise that the map $\psi \colon B \to C$ is \emph{not} uniquely determined by $\phi \colon A \to B$. 

Similarly, we define the \emph{cocone} $C$ of a map $\phi \colon A \to B$ denoted ${\rm Cocone}(\phi \colon A \to B)$ as an object fitting inside an exact triangle:
\[
C \xrightarrow{\psi} A \xrightarrow{\phi} B \xrightarrow{+1} C[1].
\]
As above, a cocone is unique up to a (non-unique) isomorphism, and the map $\psi$ is not uniquely determined by the datum of $\phi \colon A \to B$. It is a consequence of the second axiom of triangulated categories that we have an isomorphism
$${\rm Cone}(\phi \colon A \to B)\simeq {\rm Cocone}(\phi\colon A\to B)[1].$$

\end{remark}



\begin{lemma} \label{lem:coneFact1}
Let 
\[
A \xrightarrow{\phi} B \xrightarrow{\psi} C \xrightarrow{+1}
\]
be an exact triangle in a triangulated category $\mathcal T$. Consider a map $\theta \colon D \to B$ in $\mathcal{T}$ such that the composition
\[
D \xrightarrow{\theta} B \xrightarrow{\psi} C
\]
is zero. Then there exists a map $D \to A$ rendering the following diagram commutative:
\[
\begin{tikzcd}
& D \ar[dashed]{dl}[swap]{\exists} \ar{d}{\theta} \ar{dr}{0} & \\
A \arrow{r}{\phi} & B  \arrow{r}{\psi} & C 
\end{tikzcd}
\]
\end{lemma}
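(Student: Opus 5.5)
The plan is to prove this with the long exact sequence of $\Hom$ obtained from the given triangle. The basic fact I would use is that in any triangulated category, for every object $D$, the functor $\Hom_{\mathcal T}(D,-)$ is cohomological. In other words, applying it to an exact triangle gives a long exact sequence of abelian groups. In particular, the segment
\[
\Hom_{\mathcal T}(D,A) \xrightarrow{\phi\circ -} \Hom_{\mathcal T}(D,B) \xrightarrow{\psi\circ -} \Hom_{\mathcal T}(D,C)
\]
is exact in the middle.

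With this in hand, the argument is immediate. By hypothesis, $\psi\circ\theta=0$, so $\theta$ lies in the kernel of $\psi\circ -$. By exactness it lies in the image of $\phi\circ -$, so there is some $\alpha\colon D\to A$ with $\phi\circ\alpha=\theta$. This $\alpha$ is the dashed arrow: the left triangle of the diagram commutes by construction, and the right triangle is the hypothesis $\psi\circ\theta=0$.

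If I did not want to cite the cohomological property as known, I would prove exactness in the middle directly from the axioms. First, using the rotation axiom, rotate both triangles so that $D\xrightarrow{\theta}B\xrightarrow{\psi}C$ and $B\xrightarrow{\psi}C\to A[1]$ appear in suitable rotated triangles. Concretely, consider the triangle $D\xrightarrow{\mathrm{id}}D\to 0\to D[1]$ and the rotated triangle $B\xrightarrow{\psi}C\xrightarrow{}A[1]\xrightarrow{-\phi[1]}B[1]$. Next, apply the third axiom (completion of morphisms of triangles) to the commuting square given by $\theta\colon D\to B$ and $0\colon 0\to C$; the square commutes precisely because $\psi\circ\theta=0$. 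This produces a map $D[1]\to A[1]$ compatible with $\theta[1]$, up to the sign $-\phi[1]$. Finally, rotate back, i.e. shift by $[-1]$ and adjust the sign, to obtain the required $D\to A$.

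The only delicate point is this sign and rotation bookkeeping, which is routine. No genuine obstacle arises.
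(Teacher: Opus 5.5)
Your argument is correct and is exactly the paper's proof: apply the cohomological functor $\Hom_{\mathcal T}(D,-)$ to the triangle and use exactness of $\Hom_{\mathcal T}(D,A)\to\Hom_{\mathcal T}(D,B)\to\Hom_{\mathcal T}(D,C)$ at the middle term. In your optional axiomatic paragraph, the source triangle fed into the third axiom should be the rotated one $D\to 0\to D[1]\xrightarrow{-\mathrm{id}}D[1]$ rather than $D\xrightarrow{\mathrm{id}}D\to 0\to D[1]$; this is what your square with $0\colon 0\to C$ implicitly uses, and with it the two signs cancel to give $h\colon D[1]\to A[1]$ with $\phi[1]\circ h=\theta[1]$.
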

\noindent As before, we emphasise that the constructed map is not canonical.
\begin{proof}
This follows by applying ${\rm Hom}_{\mathcal T}(D, -)$ to the exact triangle so that we get an exact sequence
\[
{\rm Hom}_{\mathcal T}(D, A) \to {\rm Hom}_{\mathcal T}(D, B) \to {\rm Hom}_{\mathcal T}(D, C). \qedhere
\]
\end{proof}

\begin{lemma}\label{lem:coneFact2}
Let 
\[
A \xrightarrow{\phi} B \xrightarrow{\psi} C \xrightarrow{+1}
\]
be an exact triangle in a triangulated category $\mathcal T$. Consider a map $\theta \colon B \to D$ in $\mathcal{T}$ such that the composition
\[
A \xrightarrow{\phi} B \xrightarrow{\theta} D
\]
is zero. Then there exists a map $C \to D$ rendering the following diagram commutative:
\[
\begin{tikzcd}
& D    & \\
A \ar{ru}{0} \arrow{r}{\phi} & B \ar{u}{\theta}  \arrow{r}{\psi} & C \ar[dashed]{lu}[swap]{\exists}
\end{tikzcd}
\]
\end{lemma}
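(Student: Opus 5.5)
This is the dual of Lemma~\ref{lem:coneFact1}, and I would prove it the same way, applying a contravariant Hom functor instead of a covariant one. Apply the cohomological functor ${\rm Hom}_{\mathcal T}(-, D)$ to the exact triangle $A \xrightarrow{\phi} B \xrightarrow{\psi} C \xrightarrow{+1}$. This gives the exact sequence of abelian groups
\[
{\rm Hom}_{\mathcal T}(C, D) \xrightarrow{\circ \psi} {\rm Hom}_{\mathcal T}(B, D) \xrightarrow{\circ \phi} {\rm Hom}_{\mathcal T}(A, D).
\]
Exactness holds because ${\rm Hom}_{\mathcal T}(-,D)$ is cohomological: it sends exact triangles to long exact sequences.

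By hypothesis $\theta \circ \phi = 0$. Hence $\theta$ lies in the kernel of the map $\circ\phi$. By exactness, $\theta$ then lies in the image of $\circ\psi$. So there exists $\gamma \colon C \to D$ with $\gamma \circ \psi = \theta$, which is precisely the commutativity of the required diagram.

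If one prefers not to quote the cohomological property, it can be checked directly on this segment. Rotate the triangle to $B \xrightarrow{\psi} C \to A[1] \xrightarrow{-\phi[1]} B[1]$. Then apply the morphism axiom (TR3) to the pair of triangles
\[
A \to B \to C \to A[1] \quad\text{and}\quad 0 \to D \xrightarrow{=} D \to 0,
\]
using the square given by $A \to 0$ and $\theta \colon B \to D$. This square commutes exactly because $\theta\circ\phi = 0$. TR3 then yields the desired map $C \to D$.

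No step is a genuine obstacle. The only point needing care is exactness of the Hom sequence at the middle term, and that is the standard fact quoted above. As with Lemma~\ref{lem:coneFact1}, the map $\gamma$ need not be unique; it is determined only up to the image of ${\rm Hom}_{\mathcal T}(A[1], D)$.
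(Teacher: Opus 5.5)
Your proposal is correct and is the paper's proof: apply ${\rm Hom}_{\mathcal T}(-,D)$ to the triangle and use exactness of ${\rm Hom}_{\mathcal T}(C,D) \to {\rm Hom}_{\mathcal T}(B,D) \to {\rm Hom}_{\mathcal T}(A,D)$ at the middle term. Your alternative TR3 argument, comparing with the triangle $0 \to D \xrightarrow{=} D \to 0$, is also valid, and the rotation you mention before it is never actually used.
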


\begin{proof}
This follows by applying ${\rm Hom}_{\mathcal T}(-,D)$ to the above exact triangle so that we get an exact sequence
\[
{\rm Hom}_{\mathcal T}(A, D) \leftarrow {\rm Hom}_{\mathcal T}(B,D) \leftarrow {\rm Hom}_{\mathcal T}(C,D). \qedhere
\]
\end{proof}

\begin{lemma} \label{lem:factorisationH0Triangulated}
Let $(\mathcal{T}, \mathcal{T}^{\leq 0}, \mathcal{T}^{\geq 0})$ be a triangulated category with a $t$-structure. If
\[
f \colon A \to B
\]
is a map in $\mathcal{T}$ between $A \in \mathcal T^{\heartsuit}$ and $B \in \mathcal T^{\geq 0}$, then there exists a unique factorisation:
\[
A \xrightarrow{=} {}^t\cH^0(A) \xrightarrow{{}^t\cH^0(f)} {}^t\cH^0(B) \xrightarrow{\rm can} B,
\]
where ${\rm can}$ is the canonical map $\tau^{\leq 0}(B) \to B$. In particular, we have a natural identification:
\[
{\rm Hom}_{\mathcal T}(A,B) \xleftarrow{\cong} {\rm Hom}_{\mathcal T}(A,{}^t\cH^0(B)).
\]
\end{lemma}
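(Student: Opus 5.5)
The plan is to use the standard truncation triangle of the $t$-structure together with the defining properties of $\mathcal T^{\leq 0}$ and $\mathcal T^{\geq 0}$. Since $B \in \mathcal T^{\geq 0}$, we have $\tau^{\leq 0}B = \tau^{\leq 0}\tau^{\geq 0}B = {}^t\cH^0(B)$, and the truncation triangle reads
\[
{}^t\cH^0(B) \xrightarrow{\rm can} B \to \tau^{\geq 1}B \xrightarrow{+1}.
\]
Applying ${\rm Hom}_{\mathcal T}(A,-)$ gives a long exact sequence
\[
{\rm Hom}(A, (\tau^{\geq 1}B)[-1]) \to {\rm Hom}(A,{}^t\cH^0(B)) \to {\rm Hom}(A,B) \to {\rm Hom}(A,\tau^{\geq 1}B).
\]

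The two outer terms vanish by the $t$-structure axiom ${\rm Hom}(\mathcal T^{\leq 0},\mathcal T^{\geq 1})=0$. Indeed, $A\in\mathcal T^{\heartsuit}\subseteq \mathcal T^{\leq 0}$ and $\tau^{\geq 1}B\in\mathcal T^{\geq 1}$, so the rightmost term is zero. For the leftmost, note that $(\tau^{\geq 1}B)[-1]\in \mathcal T^{\geq 2}\subseteq\mathcal T^{\geq 1}$, so it is zero too. Hence composition with ${\rm can}$ gives the claimed bijection ${\rm Hom}(A,{}^t\cH^0(B)) \xrightarrow{\cong} {\rm Hom}(A,B)$. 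Equivalently, one can invoke directly the adjunction saying $\tau^{\leq 0}$ is right adjoint to the inclusion $\mathcal T^{\leq 0}\subseteq \mathcal T$, with counit ${\rm can}$.

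It remains to identify the unique preimage $g\colon A\to {}^t\cH^0(B)$ of $f$ with ${}^t\cH^0(f)$, using $A={}^t\cH^0(A)$ since $A$ lies in the heart. This uses functoriality of $\tau^{\leq 0}$. The canonical maps $\tau^{\leq 0}A\to A$ (an isomorphism) and $\tau^{\leq 0}B\to B$ form a commutative square with $\tau^{\leq 0}(f)$ and $f$, and here $\tau^{\leq 0}(f)={}^t\cH^0(f)$ because $B\in\mathcal T^{\geq 0}$. So ${\rm can}\circ{}^t\cH^0(f)=f$, and by uniqueness $g={}^t\cH^0(f)$.

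There is no real obstacle; the only point requiring care is this last identification of $\tau^{\leq 0}$ with ${}^t\cH^0$ on $\mathcal T^{\geq 0}$, and that holds because $\tau^{\leq 0}$ preserves $\mathcal T^{\geq 0}$.
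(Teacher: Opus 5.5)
Your proof is correct and is essentially the paper's argument. Like the paper, you apply ${\rm Hom}_{\mathcal T}(A,-)$ to the truncation triangle ${}^t\cH^0(B)\to B\to \tau^{\geq 1}B\xrightarrow{+1}$ and kill both outer terms using ${\rm Hom}(\mathcal T^{\leq 0},\mathcal T^{\geq 1})=0$. Your final functoriality step, which identifies the unique preimage of $f$ with ${}^t\cH^0(f)$, spells out a point the paper leaves implicit.
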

\begin{proof}
Consider the following exact triangle:
\[
{}^t\cH^0(B) \xrightarrow{\rm can} B \to {}^t\tau^{>0}(B) \xrightarrow{+1}.
\]
By the axioms of $t$-structures, we have
\[
{\rm Hom}_{\mathcal T}(A,{}^t\tau^{>0}(B))=0 \text{ and } \\
{\rm Hom}_{\mathcal T}(A,{}^t\tau^{>0}(B)[-1])=0.
\]
By applying ${\rm Hom}_{\mathcal T}(A,-)$ to the above exact triangle, we get an exact sequence
\[
0 \! = \! {\rm Hom}_{\mathcal T}(A,\! {}^t\tau^{>0}(B)[-1] ) \!\to\! {\rm Hom}_{\mathcal T}(A,\!{}^t\cH^0(B) ) \!\to\! {\rm Hom}_{\mathcal T}(A,\! B) \!\to\! {\rm Hom}_{\mathcal T}(A,\! {}^t\tau^{>0}(B)) \! = \! 0
\]
of abelian groups. This concludes the proof. 
\end{proof}

\begin{lemma} \label{lem:megafunctorialityadjunction}
Let $u \colon Z \to X$ be a projective morphism of  Noetherian schemes of finite dimension with $X$ regular. Let $A, B \in D^b_{\rm coh}(X)$ and $C, D \in D^b_{\rm coh}(Z)$. Consider the following commutative diagram:
\begin{equation} \label{eq:mega1}
\begin{tikzcd}[row sep = 2em, column sep = 4em]
 Ru_*D  & \ar{l}{\beta} B    \\
 Ru_*C  \ar{u}{Ru_*\theta} & \ar{l}{\alpha} A.   \ar{u}{\gamma} 
\end{tikzcd}
\end{equation}
Denote the images of $\alpha$ and $\beta$ under the adjunctions:
\begin{align*}
\Hom_{D^b_{\rm coh}(X)}(A, Ru_*C) &\cong \Hom_{D^b_{\rm coh}(Z)}(Lu^*A, C) \\
\Hom_{D^b_{\rm coh}(X)}(B, Ru_*D) &\cong \Hom_{D^b_{\rm coh}(Z)}(Lu^*B, D)
\end{align*}
by $\widetilde{\alpha}$ and $\widetilde \beta$, respectively. Then the following diagram is commutative:
\begin{equation} \label{eq:mega2}
\begin{tikzcd}[row sep = 2em, column sep = 4em]
 D  & \ar{l}{\widetilde \beta} Lu^*B    \\
 C  \ar{u}{\theta} & \ar{l}[swap]{\widetilde \alpha} Lu^*A.   \ar{u}{Lu^*\gamma} 
\end{tikzcd}
\end{equation}
\end{lemma}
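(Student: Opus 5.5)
The plan is to reduce everything to the naturality of the adjunction isomorphism $\Hom_{D^b_{\rm coh}(X)}(A, Ru_*C) \cong \Hom_{D^b_{\rm coh}(Z)}(Lu^*A, C)$ in both variables. Explicitly, the image $\widetilde\alpha$ of $\alpha$ is the composite
\[
Lu^*A \xrightarrow{Lu^*\alpha} Lu^*Ru_*C \xrightarrow{\epsilon_C} C,
\]
where $\epsilon \colon Lu^*Ru_* \to {\rm id}$ is the counit of the adjunction $Lu^* \dashv Ru_*$. The same description holds for $\beta$. The hypotheses (projective morphism, $X$ regular and of finite dimension) only serve to ensure that $Lu^*$ and $Ru_*$ preserve bounded coherent complexes, so that the adjunction holds inside $D^b_{\rm coh}$. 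I would take this for granted as standard: it is the restriction of the adjunction on $D(\mathrm{QCoh})$ or on unbounded derived categories of $\cO$-modules.

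The steps, in order, are as follows.
\begin{enumerate}
\item Express both composites in (\ref{eq:mega2}) via the counit:
\[
\theta\circ\widetilde\alpha = \theta\circ\epsilon_C\circ Lu^*\alpha,
\qquad
\widetilde\beta\circ Lu^*\gamma = \epsilon_D\circ Lu^*\beta\circ Lu^*\gamma .
\]
\item Use naturality of $\epsilon$ with respect to $\theta \colon C\to D$, which gives $\theta\circ\epsilon_C=\epsilon_D\circ Lu^*Ru_*\theta$. The first composite then becomes
\[
\epsilon_D\circ Lu^*(Ru_*\theta\circ\alpha).
\]
\item Apply the functor $Lu^*$ to the commutativity of (\ref{eq:mega1}), namely $Ru_*\theta\circ\alpha=\beta\circ\gamma$. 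This shows the two expressions coincide.
\end{enumerate}

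There is no real obstacle here; the argument is purely formal. The only point needing care is to make sure the adjunction in the statement is the standard one, given by the counit and compatible with composition. Equivalently, one can phrase it as naturality of the adjunction bijection in $A$ (precomposition with $\gamma$) and in $C$ (postcomposition with $\theta$). In that form both composites in (\ref{eq:mega2}) are the image of the single map $Ru_*\theta\circ\alpha = \beta\circ\gamma \colon A \to Ru_*D$ under the bijection for the pair $(A,D)$.
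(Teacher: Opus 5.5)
Your proposal is correct and follows the paper's own argument: the paper likewise invokes the adjunction $Lu^* \dashv Ru_*$ and obtains (\ref{eq:mega2}) from (\ref{eq:mega1}) by applying $Lu^*$ and composing with the counit $Lu^*Ru_* \to {\rm id}$. Your explicit appeal to the naturality of the counit with respect to $\theta$ is just the detail the paper leaves implicit.
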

\begin{proof}
Under the assumptions of the lemma, the functor $Lu^*$ is left adjoint to $Ru_*$ (see \cite[II.5.11]{HartshorneResidues}, cf.\ \cite[p.\ 83, Compatiblities (3)]{FM06}). Then Diagram (\ref{eq:mega2}) is constructed from Diagram (\ref{eq:mega1}) by applying $Lu^*$ and composing with the counit maps: $Lu^*Ru_*(-) \xrightarrow{\eta} (-)$ (see also \cite[Tag 0DVC]{stacks-project}).
\end{proof}


\subsection{The Cartier operator} \label{ss:Cartier}
We review the theory of Cartier operators as discussed in \cite[Section 2.3]{KW24}.
In what follows, $X$ is a variety over a perfect field $k$ of characteristic $p>0$. In particular, $F_*\Omega_X^\bullet \in D^b_{\rm coh}(X)$ and we can define coherent sheaves:
\begin{align*}
B\Omega^i_X &:= {\rm im}(d \colon F_*\Omega^{i-1}_X \to F_*\Omega^i_X), \text{ and }\\
Z\Omega^i_X &:= \ker(d \colon F_*\Omega^{i}_X \to F_*\Omega^{i+1}_X).
\end{align*}
They fit in the following short exact sequence:
\begin{equation} \label{eq:defBZ}
0 \to Z\Omega^i_X \to F_*\Omega^i_X \xrightarrow{d} B\Omega^{i+1}_X \to 0.
\end{equation}
Next, we construct the \emph{inverse Cartier operator}
\begin{equation} \label{eq:inverse-Cartier-basic}
C^{-1} \colon \Omega^i_X \to \frac{Z\Omega^i_X}{B\Omega^i_X},
\end{equation}
following \cite[Section 1.3]{fbook}. The definition being local, we may assume that $X = \Spec R$ for a finitely generated $k$-algebra $R$ and consider $\gamma \colon R \to \Omega^1_R$ given by the formula $\gamma(r) = r^{p-1}dr$ for $r \in R$. In view of the following identities (see \cite[Lemma 1.3.3]{fbook}):
\begin{enumerate}
\item $\gamma(rs) = r^p\gamma(s) + \gamma(r)s^p$,
\item $d\gamma(r)=0$, and
\item $\gamma(r+s) - \gamma(r) - \gamma(s) \in B\Omega^1_R$,
\end{enumerate}
we obtain the induced map on differential forms:
\begin{align*}
\gamma \colon \Omega^1_R \to \frac{Z\Omega^1_R}{B\Omega^1_R} \qquad 
rds \mapsto r^ps^{p-1}ds.
\end{align*}
Finally, (\ref{eq:inverse-Cartier-basic}) can be obtained by taking exterior powers.
Set
 \[
 G\Omega^i_X := \frac{F_*\Omega^i_X}{B\Omega^i_X}.
 \]
 In this paper, we shall often abuse the notation and denote the composition
\[
\Omega^i_X \to \frac{Z\Omega^i_X}{B\Omega^i_X} \hookrightarrow \frac{F_*\Omega^i_X}{B\Omega^i_X} = G\Omega^i_X 
\]
by $C^{-1}$ as well.

The key theorem of Cartier (\cite[Theorem 1.3.4]{fbook}) stipulates that when $X$ is 
{smooth}, the inverse Cartier operator (\ref{eq:inverse-Cartier-basic}) is an isomorphism for every $i$. Therefore, we get short exact sequences
\begin{equation} \label{eq:CartierOperator}
0 \to B\Omega^i_X \to Z\Omega^i_X \xrightarrow{C} \Omega^i_X \to 0,
\end{equation}
where $C \colon Z\Omega^i_X \to \Omega^i_X$ is called the \emph{Cartier operator}.  It is a well-known fact that $Z\Omega^i_X$ and $B\Omega^i_X$ are locally free in this case (cf.\ \cite[Lemma 5.10]{KTTWYY1}). Moreover, by (\ref{eq:defBZ}), we get the following short exact sequence:
\[
0 \to \Omega^i_X \cong \frac{Z\Omega^i_X}{B\Omega^i_X} \to G\Omega^i_X \xrightarrow{d} B\Omega^{i+1}_X \to 0.
\]
In particular, $G\Omega^i_X$ is locally free as well.

\section{Proof of the main theorem in characteristic $0$} \label{ss:char0}

First, we define higher Du-Bois singularities and recall  their characterisation  from \cite[Proposition 3.4]{kawakami2025inversionadjunctionhigherrational}. 

\begin{definition}[{\cite{SVV}}]
Let $X$ be a normal connected variety over $\bC$ and let $m \geq 0$ be an integer. We say that $X$ is \emph{pre-$m$-Du-Bois} if 
\[
\DDB^i_X \cong \cH^0(\DDB^{i}_{X})
\]
for all integers $0 \leq i \leq m$. 

We say that $X$ is \emph{$m$-Du-Bois} if $\DDB^i_X \cong \Omega^{[i]}_{X}$ for all integers $0 \leq i \leq m$ and
 \[
 {\rm codim}_X({\rm Sing}(X)) \geq 2m+1.
 \]
\end{definition}

\begin{theorem}[{\cite[Proposition 3.4]{kawakami2025inversionadjunctionhigherrational}}] \label{thm:SplitKW}
If $X$ is a normal, connected, $d$-dimensional affine variety over $\bC$, then $X$ has  pre-$m$-Du-Bois singularities if and only if the natural maps
\[
 H^j_\m(\cH^0(\DDB^{i}_X))  \to H^j_\m(\DDB^i_X) 
\]
are injective for every maximal ideal $\m$, $0 \leq i \leq m$, and $i+j \leq d$.
\end{theorem}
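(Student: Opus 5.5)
The forward implication is immediate. If $X$ is pre-$m$-Du-Bois, then for $0\le i\le m$ the canonical map $\cH^0(\DDB^i_X)\to\DDB^i_X$ is an isomorphism. Hence every induced map on local cohomology is an isomorphism, in particular injective. All the work is in the converse.

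For the converse, fix $0\le i\le m$ and consider the truncation triangle
\[
\cH^0(\DDB^i_X)\to\DDB^i_X\to K\xrightarrow{+1},\qquad K:=\tau^{>0}\DDB^i_X .
\]
We must show $K=0$. The long exact sequence of local cohomology at a closed point $\m$ shows that injectivity of $H^{j}_\m(\cH^0(\DDB^i_X))\to H^j_\m(\DDB^i_X)$ is equivalent to surjectivity of $H^{j-1}_\m(\DDB^i_X)\to H^{j-1}_\m(K)$. The hypothesis therefore gives these surjections for all $j\le d-i$.

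The first input I would use is the standard support bound for the higher cohomology of Du Bois complexes, obtained from a hyperresolution:
\[
\dim{\rm Supp}\,\cH^k(\DDB^i_X)\le d-i-k\quad\text{for }k>0 .
\]
Suppose $K\neq0$. Choose $k>0$ minimal with $\cH^k(K)\neq0$, let $W$ be an irreducible component of its support of dimension $e$, and set $e\le d-i-k$. Take $\m$ a general closed point of $W$. Then $H^e_\m(\cH^k(K))\neq0$, and the local-cohomology spectral sequence should give $H^{e+k}_\m(K)\neq0$. The degree $e+k\le d-i$ lies in the range where the hypothesis applies. Contributions from higher $\cH^{k'}(K)$ are controlled by the same support bound, and after localising at the generic point of $W$ we are reduced to a situation where $K$ has finite-length cohomology. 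There the spectral sequence collapses to $H^{0}_\m(\cH^{k}(K))\neq0$ in degree $k$.

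The crucial step, and I expect it to be the main obstacle, is to show that $H^{j}_\m(\DDB^i_X)\to H^{j}_\m(K)$ is \emph{zero} in the relevant degree. Combined with the surjectivity above, this would give the contradiction $H^{e+k}_\m(K)=0$. I would obtain this from Hodge theory. The groups $H^j_\m(\DDB^i_X)$ are graded pieces ${\rm Gr}^i_F$ of the mixed Hodge structure on $H^{i+j}_{\{x\}}(X^{\rm an},\bC)$, which is computed via a hyperresolution. The Hodge-to-de Rham spectral sequence degenerates, which I would use to compare these pieces with the corresponding graded pieces for a log resolution. Strictness of the Hodge filtration should then force the map to $H^j_\m(K)$ to factor through something vanishing in low degrees. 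This is the place where the recent injectivity results of \cite{popa2024injectivityvanishingdubois} and the techniques of \cite{SVV} should enter. If the direct argument fails, my fallback is to pass to a general hyperplane section through $\m$ to reduce the dimension. Base change for $\DDB^i$ with respect to general hyperplanes, together with induction on $d$, should let me reduce to the case where $K$ is supported at the single point $\m$, in which case the vanishing becomes a statement about the top degree $j=d-i$ only.
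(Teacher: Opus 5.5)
Your forward direction is fine, but the converse has a genuine gap, and it sits exactly in the top degree. The hypothesis $i+j\le d$ gives surjectivity of $H^{j}_\m(\DDB^i_X)\to H^{j}_\m(K)$ only for $j\le d-i-1$. Your support bound $\dim{\rm Supp}\,\cH^k(\DDB^i_X)\le d-i-k$, which is sharp, only produces a nonzero $H^{e+k}_\m(K)$ with $e+k\le d-i$. So the case $e+k=d-i$ is not covered, contrary to what you assert.

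This is not a bookkeeping slip: you fix $i$ and use only the hypothesis for that $i$, and the statement for a single $i$ is false. Let $C$ be the affine cone over a smooth plane curve $V$ of degree $r\ge 4$, with $L=\cO_V(1)$, so $d=2$, and take $i=1$. Since $K\in D^{\ge 1}$, we have $H^{j-1}_\m(K)=0$ for $j\le 1$, so the $i=1$ hypothesis holds automatically. Let $\pi\colon\widetilde C\to C$ be the blow-up of the vertex $\m$, which is the total space of $L^{-1}$ with exceptional divisor $E\cong V$. Then $\DDB^1_C\simeq R\pi_*\Omega^1_{\widetilde C}(\log E)(-E)$. The weight decomposition gives $\cH^1(\DDB^1_C)\cong\bigoplus_{m\ge 1}H^1(V,\mathcal{E}_m)$, where $\mathcal{E}_m$ is an extension of $L^m$ by $\Omega^1_V\otimes L^m$. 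This surjects onto $H^1(V,L)\cong H^0(V,\cO_V(r-4))^\vee\neq 0$, so $\DDB^1_C$ is not a sheaf, with $e=0$, $k=1$, $e+k=d-i$.

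Moreover, your ``crucial step'' actually holds in this example. The map $H^1_\m(\DDB^1_C)\to H^1_\m(K)$ factors through $H^1_E(\widetilde C,\Omega^1_{\widetilde C}(\log E)(-E))\to H^1(\widetilde C,\Omega^1_{\widetilde C}(\log E)(-E))$. That map is zero because the next map, restriction to $\widetilde C\smallsetminus E$, is injective: it is the inclusion of the summands of positive weight. So even a proof of the vanishing would not rescue the argument. The theorem survives here only because the condition at $(i,j)=(0,2)$ fails, $C$ not being Du Bois.

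The missing idea is that the conditions for smaller $i$ must be used. You have to argue by induction on $i$, with $X$ already pre-$(i-1)$-Du Bois, and use this to dispose of degree $d-i$, for instance by sharpening the support bound for $\cH^k(\DDB^i_X)$ under that assumption. Your fallback hints at this but misstates the mechanism. For a general hyperplane $H$, $\DDB^i_X|_H$ is not $\DDB^i_H$; it is an extension of $\DDB^i_H$ by $\DDB^{i-1}_H(-H)$, and that is exactly where the index $i-1$ and its hypothesis enter.

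Independently of this, the Hodge-theoretic input you need is only named, not proved. That input is the vanishing of $H^j_\m(\DDB^i_X)\to H^j_\m(K)$ in the relevant range, equivalently surjectivity of $H^j_\m(\cH^0(\DDB^i_X))\to H^j_\m(\DDB^i_X)$. It is the analogue for $i>0$ of the surjectivity of $H^j_\m(\cO_X)\to H^j_\m(\DDB^0_X)$. Note that the paper itself does not prove this statement; it quotes it from \cite{kawakami2025inversionadjunctionhigherrational}.
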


Next we recall the following well-known consequence of the decomposition theorem.

\begin{lemma} \label{lem:splitDT}
If $X$ is a smooth variety over $\bC$ and $f \colon Y \to X$ is a projective birational morphism such that $Y$ is smooth, too, then the natural maps
\[
f^* \colon \Omega^i_X \to Rf_* \Omega^i_Y
\]
split for every integer $i \geq 0$.
\end{lemma}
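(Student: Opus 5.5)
The plan is to deduce the splitting from the decomposition theorem applied to $Rf_*\bQ_Y[\dim Y]$, together with the fact that $\Omega^i_X \to Rf_*\Omega^i_Y$ is the $i$-th graded piece of the Hodge filtration on a map of mixed Hodge modules. First, since $X$ is smooth and $f$ is birational, $\bQ_X[n]$ (with $n=\dim X$) is a direct summand of $Rf_*\bQ_Y[n]$ in the derived category of mixed Hodge modules. By the decomposition theorem, $Rf_*\bQ^H_Y[n]$ is a direct sum of shifted pure Hodge modules, and the summand supported on all of $X$ in degree $0$ is the intermediate extension of the generic local system, which is $\bQ^H_X[n]$ because $X$ is smooth. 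Thus the adjunction map $\bQ^H_X[n]\to Rf_*\bQ^H_Y[n]$ admits a left inverse given by the trace map $Rf_*\bQ^H_Y[n]\to \bQ^H_X[n]$. To check this, note that the composition is an endomorphism of the simple object $\bQ^H_X[n]$ which is multiplication by $\deg f=1$ on the open set where $f$ is an isomorphism. It is therefore the identity.

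Next I apply the exact functors $\Gr^F_{-p}\DR$ to this splitting. Saito's theory shows that these functors commute with $Rf_*$ for projective $f$, and that $\Gr^F_{-p}\DR(\bQ^H_X[n])\cong \Omega^p_X[n-p]$, and likewise on $Y$. Hence the adjunction map becomes, after a shift, the pullback $f^*\colon \Omega^p_X\to Rf_*\Omega^p_Y$. The trace becomes a map $Rf_*\Omega^p_Y\to \Omega^p_X$, and the composition is the identity.

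The main obstacle is the compatibility step: one must make sure that $\Gr^F\DR$ of the adjunction morphism is really $f^*$ on differential forms, and not some other map that merely agrees generically. I would handle this by observing that both maps are morphisms from the locally free sheaf $\Omega^p_X$ into $Rf_*\Omega^p_Y$. Such morphisms factor through $\cH^0$ by Lemma \ref{lem:factorisationH0Triangulated}, and $\cH^0(Rf_*\Omega^p_Y)=f_*\Omega^p_Y$ is torsion-free. The two maps agree over the open set where $f$ is an isomorphism, so they coincide. Given this, the composition of $f^*$ with the trace is the identity, which proves the splitting.

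An alternative route avoids Hodge modules altogether. One could use the Grothendieck trace $Rf_*\omega_Y\to\omega_X$ and duality on $\Omega^{n-p}$: the dual of $f^*\colon \Omega^{n-p}_X\to Rf_*\Omega^{n-p}_Y$ provides a map back. This works when the source is $\Omega^0$ or $\Omega^n$, but in general it needs degeneration input, which is why I would go through the decomposition theorem.
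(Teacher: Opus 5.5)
Your proposal is correct and follows the paper's own proof. Both arguments split $\bQ^H_X[n]$ off $Rf_*\bQ^H_Y[n]$ using Saito's decomposition theorem, then apply $\Gr^F_{-p}\DR$, using its compatibility with $Rf_*$ for projective $f$ and the identification $\Gr^F_{-p}\DR(\bQ^H_X[n])\cong\Omega^p_X[n-p]$. Your extra checks (the trace giving the left inverse, and identifying the resulting map with $f^*$ by factoring through the torsion-free sheaf $f_*\Omega^p_Y$) are points the paper leaves implicit, and they are sound.
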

\begin{proof}
By Saito's decomposition theorem for Hodge modules (\cite[(4.5.2) and (4.5.4)]{Saito90}), we have
\[
Rf_*\bQ^H_Y[d] = \bQ^H_X[d] \oplus N,
\]
for some $N \in D^b({\rm MHM}(X))$. Therefore, the map
\begin{align*}
\Omega^i_X[d-i] &= \Gr_{-i}\DR(\bQ^H_X[d]) \\
&\longrightarrow \Gr_{-i}\DR(Rf_*\bQ^H_Y[d]) \\
&= Rf_*\Gr_{-i}\DR(\bQ^H_Y[d]) \\
&= Rf_*\Omega^i_Y[d-i]
\end{align*}
splits as required. We refer to \cite[Proposition 2.17 and Remark 2.18]{kawakami2025inversionadjunctionhigherrational} for the properties of the ${\rm Gr}_\bullet {\rm DR}$ functor used in the above argument.
\end{proof}

\begin{setting} \label{eq:char0setting}
Let $X$ be a smooth variety over $\bC$, let $D$ be a reduced divisor, and let $f \colon Y \to X$ be a log resolution of $(X,D)$ which is an isomorphism over $X\smallsetminus D$. Set $E := f^*(D)_{\rm red}$. For every $i \geq 0$, we define natural restriction maps $\phi$ and $\psi$:
\begin{align}
&\phi \colon \Omega^i_X \xrightarrow{{\rm res}} \Omega^i_D \xrightarrow{{\rm can}}  \DDB^i_D, \text{ and }  \label{eq:phipsi} \nonumber \\
&\psi \colon \Omega^i_Y \xrightarrow{{\rm res}} \Omega^i_E \xrightarrow{{\rm can}}  \DDB^i_E \cong \Omega^i_E/{\rm tor}, \nonumber
\end{align}
where  ${\rm can}$ denotes the natural map from a sheaf to its derived $h$-sheafification, and the isomorphism $\DDB^i_E \cong \Omega^i_E/{\rm tor}$ is discussed in Lemma \ref{lem:basic-sesOmega}. 

Finally, we fix splittings $\alpha_i \colon Rf_*\Omega^i_Y \to \Omega^i_X$
of the pull-back maps $f^* \colon \Omega^i_X \to Rf_*\Omega^i_Y$ coming from Lemma \ref{lem:splitDT}. We shall often drop the subscript $i$, and just denote this map by $\alpha$.
\end{setting}

\begin{lemma} \label{lem:DDBProperty}
With notation as in Setting \ref{eq:char0setting}, there exists a map $\tau \colon Rf_*\Omega^i_Y(\log E)(-E) \to  \Omega^i_X$ which makes the following diagram commutative, with the rows being exact triangles:
\[
\begin{tikzcd}
\hphantom{a} & \ar{l}{+1} Rf_*\Omega^i_E/{\rm tor}  & \ar{l}{Rf_*\psi} Rf_*\Omega^i_Y  & \ar{l}{Rf_*\tau_Y} Rf_*\Omega^i_Y(\log E)(-E) \\
\hphantom{a} & \ar{l}{+1} \DDB^i_D  \ar{u}{f^*} & \ar{l}{\phi} \Omega^i_X  \ar{u}{f^*} &  \ar{l}{\tau} Rf_*\Omega^i_Y(\log E)(-E).  \ar{u}{=}  
\end{tikzcd}
\]
Here $\tau_Y$ is the natural inclusion $\Omega^i_Y(\log E)(-E) \to \Omega^i_Y$.

Moreover, there exists a map $\beta \colon Rf_*\Omega^i_E/{\rm tor} \to \DDB^i_D$ which renders the following diagram commutative as well:
\[
\begin{tikzcd}
\hphantom{a} & \ar{l}{+1} Rf_*\Omega^i_E/{\rm tor} \ar[dashed]{d}{\beta} & \ar{l}{Rf_*\psi} Rf_*\Omega^i_Y  \ar{d}{\alpha} & \ar{l}{Rf_*\tau_Y} Rf_*\Omega^i_Y(\log E)(-E) \ar{d}{=} \\
\hphantom{a} & \ar{l}{+1} \DDB^i_D   & \ar{l}{\phi} \Omega^i_X  &  \ar{l}{\tau} Rf_*\Omega^i_Y(\log E)(-E).
\end{tikzcd}
\]
\end{lemma}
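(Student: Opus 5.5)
The plan is to build the first diagram from the fundamental exact triangle for Du Bois complexes of an abstract blow-up square. Since $f$ is an isomorphism over $X\smallsetminus D$ and $E=f^{-1}(D)_{\rm red}$, we have the standard exact triangle (cf.\ \cite[Chapter~7.3]{Peter-Steenbrink(Book)}, or cdh/$h$-descent):
\[
\DDB^i_X \to \DDB^i_D \oplus Rf_*\DDB^i_Y \to Rf_*\DDB^i_E \xrightarrow{+1}.
\]
With $\DDB^i_X=\Omega^i_X$, $\DDB^i_Y=\Omega^i_Y$ and $\DDB^i_E=\Omega^i_E/{\rm tor}$ (Lemma \ref{lem:basic-sesOmega}), this triangle identifies the cocone of $\phi$ with the cocone of $Rf_*\psi$: the square
\[
\Omega^i_X \xrightarrow{\phi} \DDB^i_D, \qquad Rf_*\Omega^i_Y \xrightarrow{Rf_*\psi} Rf_*\Omega^i_E/{\rm tor}
\]
(vertical maps $f^*$) is homotopy cartesian. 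By Lemma \ref{lem:basic-sesOmega}, the short exact sequence $0\to \Omega^i_Y(\log E)(-E)\to\Omega^i_Y\to\Omega^i_E/{\rm tor}\to 0$ gives the top row after applying $Rf_*$. The bottom row is then the triangle with the same cocone $Rf_*\Omega^i_Y(\log E)(-E)$. To construct $\tau$ concretely, I would define it as $\tau := f^*{}^{-1}$-less: namely, apply Lemma \ref{lem:coneFact1} to $\theta = f^*\circ Rf_*\tau_Y$? That runs the wrong way. Instead I would use the homotopy cartesian property directly: a homotopy pullback square yields a morphism of triangles whose map on cocones is an isomorphism, and we choose it to be the identity. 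Concretely, take the octahedral/morphism-of-triangles construction from the descent triangle and obtain $\tau$ with $f^*\circ\tau = Rf_*\tau_Y$ and $\phi\circ\tau=0$, with $\operatorname{Cone}(\tau)\simeq \DDB^i_D$ compatible with $\phi$. The main obstacle is this compatibility: we must ensure that the map $\DDB^i_D\to Rf_*\Omega^i_E/{\rm tor}$ in the triangle is $f^*$ and that the cone of $\tau$ is really $\DDB^i_D$ with connecting map $\phi$. Both follow from the homotopy cartesian square, since the cocones of parallel maps in such a square agree.

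For $\beta$, I would use Lemma \ref{lem:coneFact2} applied to the top row with target $\DDB^i_D$ and $\theta := \phi\circ\alpha \colon Rf_*\Omega^i_Y\to \DDB^i_D$. To apply it, we need $\phi\circ\alpha\circ Rf_*\tau_Y=0$. Now $Rf_*\tau_Y = f^*\circ \tau$ by the first diagram, so
\[
\phi\circ\alpha\circ Rf_*\tau_Y=\phi\circ(\alpha\circ f^*)\circ\tau=\phi\circ\tau=0,
\]
using $\alpha\circ f^*={\rm id}$ (the splitting from Lemma \ref{lem:splitDT}) and the exactness of the bottom row. Lemma \ref{lem:coneFact2} then gives $\beta$ with $\beta\circ Rf_*\psi=\phi\circ\alpha$. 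The right square commutes because $\alpha\circ Rf_*\tau_Y=\alpha\circ f^*\circ\tau=\tau$.

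Finally, to make $(\beta,\alpha,{\rm id})$ a full morphism of triangles, I would also need the square involving the connecting maps $+1$ to commute. If Lemma \ref{lem:coneFact2} alone does not guarantee this, I would instead invoke the third axiom of triangulated categories. That axiom extends the commutative square $(\alpha,{\rm id})$ between the two triangles to a morphism of triangles, and so produces a $\beta$ satisfying both $\beta\circ Rf_*\psi=\phi\circ\alpha$ and compatibility with the connecting maps. This version is the one I would state.
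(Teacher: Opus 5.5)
Your proposal is correct and follows essentially the paper's route: the homotopy cartesian square from the descent triangle for $f$ (the paper phrases this as $\DDB^i$ being a derived $h$-sheaf) identifies the cocone of $\phi$ with $Rf_*\Omega^i_Y(\log E)(-E)$. The map $\beta$ then comes from the third axiom applied to the commuting square $\alpha\circ Rf_*\tau_Y=\alpha\circ f^*\circ\tau=\tau$. The detour through Lemma \ref{lem:coneFact2} and the abandoned aside about inverting $f^*$ are unnecessary but harmless.
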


We warn the reader that while the choice of $\alpha$ is dependent on the decomposition theorem only, the formation of $\beta$ is not canonical or uniquely determined by $\alpha$ in any capacity. We only get from the proof the commutativity $\beta \circ Rf_*\psi = \phi \circ \alpha$. On the other hand, this diagram shows that $\tau$ is determined by $\alpha$, namely $\tau = \alpha \circ Rf_*\tau_Y$.

\begin{proof}
We have the following diagram: 
\[
\begin{tikzcd}
E \ar[hook]{r} \ar{d}{f|_E} & Y \ar{d}{f} \\
    D \ar[hook]{r} & X.
\end{tikzcd}
\]
Since $\DDB^i_X$ is a derived $h$-sheaf, this induces the following homotopy pullback square:
\[
\begin{tikzcd}
Rf_*\DDB^i_E   & \ar{l}{Rf_*\psi} Rf_*\Omega^i_Y  \\
\DDB^i_D  \ar{u}{f^*} & \ar{l}{\phi} \Omega^i_X.  \ar{u}{f^*} 
\end{tikzcd}
\]
In particular, by the standard property of homotopy pullback squares, the cofiber of the lower horizontal map is naturally isomorphic to the cofiber of the top horizontal map. Since $\DDB^i_E  \cong \Omega^i_E/{\rm tor}$, where ${\rm tor}$ denotes the torsion part of $\Omega^i_E$, and
\[
{\rm ker}(\Omega^i_Y \to \Omega^i_E/{\rm tor}) \cong \Omega^i_Y(\log E)(-E)
\]
(cf.\ Lemma \ref{lem:basic-sesOmega}), we get the following diagram in which rows are exact triangles:
\[
\begin{tikzcd}[column sep = large]
\hphantom{a} & \ar{l}{+1} Rf_*\DDB^i_E   & \ar{l}{Rf_*\psi} Rf_*\Omega^i_Y  
& \ar{l}{ Rf_*\tau_Y}  Rf_*\Omega^i_Y(\log E)(-E)  \\
\hphantom{a} & \ar{l}{+1} \DDB^i_D  \ar{u}{f^*} & \ar{l}{\phi} \Omega^i_X  \ar{u}{f^*} &  \ar{l}{\tau} Rf_*\Omega^i_Y(\log E)(-E).  \ar{u}{=}  
\end{tikzcd}
\]
Recall that $\tau_Y$ is the natural inclusion  $\Omega^i_Y(\log E)(-E) \to \Omega^i_Y$. The reader might also construct the above diagram without discussing homotopy pullback squares, but instead using the octahedral axiom.

Now, Lemma \ref{lem:splitDT} provides us with the aforementioned  splitting
\[
\alpha \colon Rf_*\Omega^i_Y \to \Omega^i_X
\]
of $f^* \colon \Omega^i_X \to Rf_*\Omega^i_Y$. Next, we check that the following diagram commutes:
\[
\begin{tikzcd}[column sep = large]
Rf_*\Omega^i_Y  \ar{d}{\alpha} & \ar{l}{Rf_*\tau_Y} Rf_*\Omega^i_Y(\log E)(-E) \ar{d}{=} \\
 \Omega^i_X   &  \ar{l}{\tau} Rf_*\Omega^i_Y(\log E)(-E).  
\end{tikzcd}
\]
This is clear by the following composition
\[
\begin{tikzcd}[column sep = large]
Rf_*\Omega^i_Y(\log E)(-E) \ar{rd}{\tau}   \ar{r}{=} & Rf_*\Omega^i_Y(\log E)(-E) \ar{r}{Rf_*\tau_Y} & Rf_*\Omega^i_Y \ar{r}{\alpha} & \Omega^i_X.  \\
& \Omega^i_X \ar{ur}{f^*} \ar[bend right=10]{urr}{{\rm id}} & & 
\end{tikzcd}
\]
Hence $\tau = \alpha \circ Rf_*\tau_Y$.

Therefore, by the third axiom of triangulated categories we get the sought-after map $\beta \colon Rf_*\DDB^i_E \to \DDB^i_D$ fitting inside the following commutative diagram:
\[
\begin{tikzcd}[column sep = large]
\hphantom{a} & \ar{l}{+1} Rf_*\DDB^i_E \ar[dashed]{d}{\beta} & \ar{l}{Rf_*\psi} Rf_*\Omega^i_Y  \ar{d}{\alpha} & \ar{l}{Rf_*\tau_Y} Rf_*\Omega^i_Y(\log E)(-E) \ar{d}{=} \\
\hphantom{a} & \ar{l}{+1} \DDB^i_D  & \ar{l}{\phi} \Omega^i_X   &  \ar{l}{\tau} Rf_*\Omega^i_Y(\log E)(-E).
\end{tikzcd}
\]
In particular, $\phi\circ\alpha=\beta\circ Rf_*\psi$ as required. This concludes the proof of the lemma.

\end{proof}

 In particular, we get the following.
\begin{corollary} \label{cor:afterrestriction}
With the notation in Setting \ref{eq:char0setting} and Lemma \ref{lem:DDBProperty},
if $f^*D$ denotes the scheme-theoretic pullback of $D$, then the following two diagrams are commutative:
\[
\begin{tikzcd}[row sep = 2em, column sep = 4em]
 Rf_*\DDB^i_E & \ar{l}{Rf_*\psi_D} Rf_*(\Omega^i_Y|_{f^*D})    \\
 \DDB^i_D  \ar{u}{f^*} & \ar{l}{\phi_D} \Omega^i_X|_D. \ar{u}{f^*} 
\end{tikzcd}
\quad \text{ and } \quad
\begin{tikzcd}[row sep = 2em, column sep = 4em]
 Rf_*\DDB^i_E \ar{d}{\beta} & \ar{l}{Rf_*\psi_D} Rf_*(\Omega^i_Y|_{f^*D}) \ar{d}{\alpha_D}   \\
 \DDB^i_D   & \ar{l}{\phi_D} \Omega^i_X|_D. 
\end{tikzcd}
\]
where $\phi_D$ and $\psi_D$ are restrictions of $\phi$ and $\psi$ to $D$ and $f^*D$, respectively, and $\alpha_D := \alpha \otimes^L_{\cO_X} \cO_D$ is a splitting.
\end{corollary}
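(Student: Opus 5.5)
The corollary follows by applying the derived restriction functor $-\otimes^L_{\cO_X}\cO_D$ to the two commutative squares of Lemma \ref{lem:DDBProperty}, using the adjunction between $Lf^*$ and $Rf_*$. The squares are $\phi\circ\text{id}$ versus $f^*$ (first diagram) and $\phi\circ\alpha=\beta\circ Rf_*\psi$ (second diagram). The key observation is that the targets $\DDB^i_D$ and $Rf_*\DDB^i_E$ are $\cO_D$-modules, in the derived sense of objects pushed forward from $D$. Hence maps into them out of $\Omega^i_X$ or $Rf_*\Omega^i_Y$ factor through the restrictions to $D$.

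First, I construct $\phi_D$ and $\psi_D$. Since $\DDB^i_D=\iota_*(\cdot)$ for $\iota\colon D\hookrightarrow X$, the map $\phi\colon\Omega^i_X\to\iota_*\DDB^i_D$ corresponds by adjunction to
\[
L\iota^*\Omega^i_X=\Omega^i_X|_D\to\DDB^i_D .
\]
This is exactly the composition $\Omega^i_X|_D\xrightarrow{\rm res}\Omega^i_D\xrightarrow{\rm can}\DDB^i_D$, because $\Omega^i_X$ is locally free, so $L\iota^*$ agrees with ordinary restriction. The same argument on $Y$ with $j\colon f^*D\hookrightarrow Y$ gives $\psi_D$. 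Here $\Omega^i_E/{\rm tor}$ is an $\cO_{f^*D}$-module, since $E=(f^*D)_{\rm red}\subseteq f^*D$.

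Next, the projection formula gives
\[
Rf_*\Omega^i_Y\otimes^L\cO_D\cong Rf_*(\Omega^i_Y\otimes^L Lf^*\cO_D)=Rf_*(\Omega^i_Y|_{f^*D}).
\]
Here $Lf^*\cO_D=\cO_{f^*D}$ because $f^*D$ is a Cartier divisor and $f^*$ of the local equation is a nonzerodivisor. Tensoring $\alpha$ and $f^*$ with $\cO_D$ then yields $\alpha_D$ and the pullback map of the first diagram. Since $\alpha\circ f^*={\rm id}$, we get $\alpha_D\circ f^*_D={\rm id}$, so $\alpha_D$ is a splitting.

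Commutativity follows by applying $-\otimes^L\cO_D$ to each square of Lemma \ref{lem:DDBProperty}. For an object $M$ pushed forward from $D$, restriction turns a map $A\to M$ into $A\otimes^L\cO_D\to M\otimes^L\cO_D$, and composing with the counit $M\otimes^L\cO_D\to M$ recovers the adjoint map. This is the pattern of Lemma \ref{lem:megafunctorialityadjunction}, applied to $u=\iota$. Naturality of the counit then shows that each restricted square commutes. For example, $\beta\circ Rf_*\psi_D$ and $\phi_D\circ\alpha_D$ are both the adjoints of $\beta\circ Rf_*\psi=\phi\circ\alpha$.

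The main obstacle is bookkeeping. One has to check that the maps $f^*\colon\DDB^i_D\to Rf_*\DDB^i_E$ and $\beta$ are $\cO_D$-linear, i.e. morphisms in $D(\cO_D)$, so that the counit argument applies to them. For $f^*$ this is fine because it comes from $h$-sheafification over $D$. For $\beta$, I would instead argue purely on $X$. I would not claim $\beta$ is $\cO_D$-linear. Instead, the two composites $Rf_*(\Omega^i_Y|_{f^*D})\to\DDB^i_D$ agree after precomposition with the surjection-type map $Rf_*\Omega^i_Y\to Rf_*\Omega^i_Y\otimes^L\cO_D$. Moreover, by adjunction, maps from the restriction into an $\iota_*$-object are determined by that precomposition.
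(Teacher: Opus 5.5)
Your main argument is the paper's. You apply Lemma~\ref{lem:megafunctorialityadjunction} with $u=\iota\colon D\hookrightarrow X$ to the left squares of Lemma~\ref{lem:DDBProperty}, and you identify $L\iota^*Rf_*\Omega^i_Y\cong Rf_*(\Omega^i_Y|_{f^*D})$ by the projection formula. The first diagram and the splitting property of $\alpha_D$ go through as you say, because $f^*\colon\DDB^i_D\to Rf_*\DDB^i_E$ is a morphism on $D$.

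\textbf{The gap is in your treatment of $\beta$.} Your sentence that $\beta\circ Rf_*\psi_D$ and $\phi_D\circ\alpha_D$ are ``both the adjoints of $\beta\circ Rf_*\psi$'' presupposes that $\beta$ is a morphism in $D^b_{\rm coh}(D)$. In your last paragraph you explicitly decline to assume this, and the substitute argument does not work:
\begin{itemize}
\item Adjunction says that $g\mapsto\iota_*g\circ\eta_A$ is a bijection $\Hom_{D(D)}(L\iota^*A,C)\to\Hom_{D(X)}(A,\iota_*C)$. So only morphisms of the form $\iota_*g$ are determined by their precomposition with the unit $\eta_A\colon A\to\iota_*L\iota^*A$.
\item The unit is not an epimorphism. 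From the triangle $A(-D)\to A\xrightarrow{\eta_A}A\otimes^L\cO_D\to A(-D)[1]$ and Lemma~\ref{lem:coneFact2}, a morphism $h\colon\iota_*L\iota^*A\to\iota_*C$ in $D(X)$ with $h\circ\eta_A=0$ is merely one that factors through $A(-D)[1]$.
\item For $A=Rf_*\Omega^i_Y$ and $C=\DDB^i_D$, such $h$ are governed by $\Hom_{D(D)}\big(Rf_*(\Omega^i_Y|_{f^*D})\otimes\cO_D(-D),\DDB^i_D[-1]\big)$. There is no reason for this group to vanish, since both arguments have higher cohomology sheaves.
\end{itemize}
Hence, if $\beta$ is only a morphism in $D^b_{\rm coh}(X)$, knowing that $\beta\circ\iota_*Rf_*\psi_D$ and $\iota_*(\phi_D\circ\alpha_D)$ agree after precomposition with $\eta$ does not show that they agree.

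What closes the argument is exactly the input you tried to avoid: $\beta$ must be (the pushforward of) a morphism in $D^b_{\rm coh}(D)$, so that Lemma~\ref{lem:megafunctorialityadjunction} applies with $\theta=\beta$. That is what the paper's proof does: it writes $Ru_*\beta$ and passes to adjoints. Be aware, though, that the $\beta$ of Lemma~\ref{lem:DDBProperty} is produced by the third axiom in $D^b_{\rm coh}(X)$, and $\iota_*$ is not full. For example, for $X$ affine, $\mathrm{Ext}^1_X(\cO_D,\cO_D)\cong\Gamma(D,\cO_D(D))\neq0$ while $\mathrm{Ext}^1_D(\cO_D,\cO_D)=0$. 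So this point needs a justification; it is not bookkeeping. Applying Lemma~\ref{lem:coneFact2} on $D$ to $0\to\ker(\psi_D)\xrightarrow{v}\Omega^i_Y|_{f^*D}\xrightarrow{\psi_D}\Omega^i_E/{\rm tor}\to0$, producing a $\beta$ on $D$ with $\beta\circ Rf_*\psi_D=\phi_D\circ\alpha_D$ is equivalent to showing $\phi_D\circ\alpha_D\circ Rf_*v=0$ in $D^b_{\rm coh}(D)$. Your precomposition argument cannot substitute for this.
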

\begin{proof}
This follows from the commutativity of the left squares in Lemma \ref{lem:DDBProperty} in view of Lemma \ref{lem:megafunctorialityadjunction} with $u \colon D \hookrightarrow X$ being the natural closed immersion:
\[
\begin{tikzcd}[row sep = 2em, column sep = 4em]
 Ru_*Rf_*\DDB^i_E \ar[dashed, bend left = 30]{d}{Ru_*\beta} & \ar{l}{Rf_*{\psi}} Rf_*\Omega^i_Y \ar[dashed, bend left = 30]{d}{\,{\alpha}}   \\
 Ru_*\DDB^i_D  \ar{u}{Ru_*f^*} & \ar{l}{{\phi}} \Omega^i_X   \ar{u}{f^*} 
\end{tikzcd}
\qquad \overset{\text{adjunction}}{\rightsquigarrow} \qquad
\begin{tikzcd}[row sep = 2em, column sep = 4em]
 Rf_*\DDB^i_E \ar[dashed, bend left = 30]{d}{\beta} & \ar{l}{Rf_* \psi_D} Lu^* Rf_*\Omega^i_Y \ar[dashed, bend left = 30]{d}{\, Lu^*\alpha}   \\
 \DDB^i_D  \ar{u}{f^*} & \ar{l}{ \phi_D} Lu^*\Omega^i_X  \ar{u}{Lu^*f^*}     
\end{tikzcd}
\]
Here $Lu^*\Omega^i_X = \Omega^i_X \otimes_{\cO_X} \cO_D = \Omega^i_X|_D$ by the local freeness of $\Omega^i_X$ and 
\[
Lu^*Rf_*\Omega^i_Y = Rf_*\Omega^i_Y \otimes_{\cO_X}^L \cO_D \cong Rf_*(\Omega^i_Y \otimes_{\cO_Y} \cO_{f^*D}) = Rf_*(\Omega^i_Y|_{f^*D})
\]
by the projection formula \cite[Tag 0B54]{stacks-project} and the local freeness of $\Omega^i_Y$.
\end{proof}

\begin{lemma} \label{lem:MirceaCalculation}
Let $(R,\m)$ be {the local ring} of a smooth affine variety over $\bC$ at a closed point. Let $x_1,\ldots, x_n \in R$ be a lift of an $R/\m$-basis of $\m/\m^2$. 
If $g = ux_1^{a_1}\cdots x_m^{a_m}$, for $u \in R^\times$ and integers $a_1, \ldots, a_m \geq 1$, and we put $D = V(g)$ and $E= V(x_1\cdots x_m)$, then  
 the composition of natural maps:
\[
\Omega^{k-1}_R(\log E)(-E) \hookrightarrow \Omega^{k-1}_R \xrightarrow{\wedge dg} \Omega^k_R \xrightarrow{{\rm res}} \Omega^k_R|_D
\]
is zero for every $k >  0$.
\end{lemma}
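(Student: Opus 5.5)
The plan is a direct local computation. The target $\Omega^k_R|_D=\Omega^k_R/g\,\Omega^k_R$ is a free $R/(g)$-module, so it suffices to show that for every local section $\omega$ of $\Omega^{k-1}_R(\log E)(-E)$ we have
\[
\omega\wedge dg\in g\,\Omega^k_R .
\]
Everything is $R$-linear, so we only need to check this on a set of generators of $\Omega^{k-1}_R(\log E)(-E)$. Since $x_1,\ldots,x_n$ is a regular system of parameters, $dx_1,\ldots,dx_n$ is a basis of $\Omega^1_R$. Then $\Omega^{k-1}_R(\log E)$ is free on the wedges of $k-1$ elements taken from
\[
\frac{dx_1}{x_1},\ \ldots,\ \frac{dx_m}{x_m},\ dx_{m+1},\ \ldots,\ dx_n .
\]
Twisting by $-E$ multiplies these generators by $x_1\cdots x_m$.

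The key step is to compute $dg$ logarithmically. Set $h=x_1^{a_1}\cdots x_m^{a_m}$, so $g=uh$. Then
\[
dg=h\,du+u\,dh=g\Big(\frac{du}{u}+\sum_{j=1}^m a_j\frac{dx_j}{x_j}\Big)=g\,\eta ,
\]
where $\eta:=u^{-1}du+\sum_j a_j\,dx_j/x_j$ is a section of $\Omega^1_R(\log E)$. Here $u^{-1}du$ is regular because $u$ is a unit.

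Now take $\omega=x_1\cdots x_m\,\omega_0$ with $\omega_0$ one of the logarithmic generators above. Then
\[
\omega\wedge dg=g\cdot\big(x_1\cdots x_m\,\omega_0\wedge\eta\big).
\]
The form $\omega_0\wedge\eta$ lies in $\Omega^k_R(\log E)$, and multiplying by $x_1\cdots x_m$ sends $\Omega^k_R(\log E)$ into $\Omega^k_R$: each generator has at most one factor $dx_j/x_j$ for each $j\le m$, so the product $x_1\cdots x_m$ clears every denominator. Hence $\omega\wedge dg\in g\,\Omega^k_R$, which vanishes after restriction to $D$.

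The only point needing care is the factorization $dg=g\eta$ with $\eta$ logarithmic, which uses $a_j\ge 1$ and that $u$ is a unit. I expect no real obstacle. One should only note that $\Omega^{k-1}_R(\log E)(-E)\subseteq\Omega^{k-1}_R$ is the submodule described above, which is standard for the snc divisor $E$ in these coordinates.
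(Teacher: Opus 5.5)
Your proposal is correct and follows essentially the same route as the paper: both check the claim on the standard generators $x_1\cdots x_m\,\omega_0$ of $\Omega^{k-1}_R(\log E)(-E)$ and show that $\omega\wedge dg$ is divisible by $g$, using $dg=h\,du+u\,dh$ with $dh=h\sum_t a_t\,dx_t/x_t$. Your factorization $dg=g\,\eta$ with $\eta$ logarithmic is a tidier packaging of the paper's explicit expansion, and it lets you skip the paper's passage to the completion $\bC\llbracket x_1,\ldots,x_n\rrbracket$.
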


\begin{proof}
Up to replacing $R$ by its completion, we may assume that $R = \bC \llbracket x_1,\ldots, x_n \rrbracket$.
Pick a form $\omega$ in $\Omega^{k-1}_R(\log E)(-E)$:
\[
\omega = fx_1\cdots x_m\left(\prod_{j \in J}\frac{dx_j}{x_j}\right)\left(\prod_{i \in I}dx_i\right)
\]
with $J \subseteq [1,m]$, $I \subseteq [m+1,n]$, and $f \in R$ such that $|J| + |I| = k-1$. The image of this form in $\Omega^k_R|_{D}$, namely:
\[
fx_1\cdots x_m\left(\prod_{j \in J}\frac{dx_j}{x_j}\right)\left(\prod_{i \in I}dx_i\right) \wedge dg 
\]
is equal to:
\begin{multline*}
fx_1^{a_1+1}\cdots x_m^{a_m+1}du\left(\prod_{j \in J}\frac{dx_j}{x_j}\right)\left(\prod_{i \in I}dx_i\right) + \\ \sum_{t=1}^m ua_tfx_1^{a_1+1}\cdots x_m^{a_m+1}\frac{dx_t}{x_t}\left(\prod_{j \in J}\frac{dx_j}{x_j}\right)\left(\prod_{i \in I}dx_i\right)
\end{multline*}
which, by direct inspection, is divisible by $x_1^{a_1}\cdots x_m^{a_m}$. Hence, this image is zero in $\Omega^k_R|_{D}$ concluding the proof. \qedhere
\end{proof}

The next proposition is the key technical component of the proof of the main theorem in characteristic zero. Before we state it, we need to introduce additional notation building on that of Setting \ref{eq:char0setting} and Corollary \ref{cor:afterrestriction}. Set 
\begin{equation} \label{eq:K}
K := {\rm Cocone}(\phi_D \colon \Omega^i_X|_D \to \DDB^i_D),
\end{equation}
namely, we fix an exact triangle:
\begin{equation} \label{eq:KExactTriangle}
\begin{tikzcd}
K \ar{r}{u} & \Omega^i_X|_D \ar{r}{\phi_D} & \DDB^i_D \ar{r}{+1} & . \hphantom{a}
\end{tikzcd}
\end{equation}
Assume that $i \leq {\rm codim}_D\, {\rm Sing}(D)$. By the third axiom of triangulated categories, there exists a map of exact triangles:
\begin{equation} \label{eq:keydiagramChar0}
\begin{tikzcd}
K \ar{r}{u} & \Omega^i_X|_D \ar{r}{\phi_D} & \DDB^i_D \ar{r}{+1} & \hphantom{a} \\
\Omega^{i-1}_D \ar[dashed]{u}{\gamma} \ar{r}{\wedge dg} & \Omega^i_X|_D \ar{u}[swap]{=} \ar{r}{\rm res} & \Omega^i_D \ar{u}{{\rm can}} \ar{r}{+1} & .\hphantom{a}
\end{tikzcd}
\end{equation}
Here, the lower exact triangle is given by Lemma \ref{LemmaKoszul}. 

\begin{remark} \label{rem:gammaUnique}
Assume that $D$ is normal and $i \leq {\rm codim}_D\, {\rm Sing}(D) -2$.  Given that $\cH^0(\DDB^{i}_D)$ is torsion-free, Proposition \ref{prop:hypesurfaceVanishing} shows that the natural maps 
\[
\Omega^{i}_D \to \cH^0(\DDB^{i}_D) \to \Omega^{[i]}_D
\]
are all isomorphisms. The same applies to $\Omega^{i-1}_D \to \cH^0(\DDB^{i-1}_D) \to \Omega^{[i-1]}_D$. We claim that the map $\gamma$ is uniquely determined in this case. 

To this end, note that $K\in D^{\geq 0}_{\rm coh}(D)$. Moreover, $\cH^0(u) \colon \cH^0(K) \to \Omega^i_X|_D$ is an injection in the abelian category of coherent sheaves on $D$. Thus from the diagram:
\[
\begin{tikzcd}[column sep = large]
0 \ar{r} & \cH^0(K) \ar{r}{\cH^0(u)} & \Omega^i_X|_D \ar{r}{\cH^0(\phi_D)} & \cH^0(\DDB^i_D) \\
0 \ar{r} & \Omega^{i-1}_D \ar{u}{\cH^0(\gamma)} \ar{r}{\wedge dg} & \Omega^i_X|_D \ar{u}[swap]{=} \ar{r}{\rm res} & \Omega^i_D \ar{u}{{\rm can}}[swap]{\cong} 
\end{tikzcd}
\]
we can infer that the map $H^0(\gamma) \colon \Omega^{i-1}_D \to \cH^0(K)$ is uniquely determined\footnote{To put it simply, once (\ref{eq:KExactTriangle}) is fixed, we can canonically identify 
\[
0 \to \cH^0(K) \xrightarrow{\cH^0(u)} \Omega^i_X|_D \xrightarrow{\cH^0(\phi_D)} \cH^0(\DDB^i_D) \quad \text{ with } \quad 0 \to \Omega^{i-1}_D \xrightarrow{\wedge dg} \Omega^i_X|_D \xrightarrow{\rm res} \Omega^i_D.
\]
}. 
By Lemma \ref{lem:factorisationH0Triangulated}, there exists a factorisation:
\[
\gamma \colon \Omega^{i-1}_D \xrightarrow{\cong} \cH^0(K) \xrightarrow{\rm can} K,
\]
where ${\rm can}$ identifies with the natural map $\tau^{\leq 0}(K) \to K$. This concludes the claim that $\gamma$ is uniquely determined.

\end{remark}

\begin{proposition} \label{prop:keydiagramchar0}
With the above notation (see Setting \ref{eq:char0setting} and Corollary \ref{cor:afterrestriction}) we assume that $D=V(g)$ is a normal divisor of dimension $d$ in a smooth variety $X = \Spec R$ for some $g \in R$. If $i>0$ is a fixed integer such that $i \leq {\rm codim}_D\, {\rm Sing}(D)-2$, then
there exists a map $\underline{\gamma} \colon \DDB^{i-1}_D \to K$ fitting inside a factorisation (see (\ref{eq:keydiagramChar0})):
\[
\gamma \colon \Omega^{i-1}_D \xrightarrow{\rm can} \DDB^{i-1}_D \xrightarrow{\underline{\gamma}} K.
\] 
\end{proposition}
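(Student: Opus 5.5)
The plan is to build $\underline{\gamma}$ from the resolution $f\colon Y\to X$, using the splitting $\alpha_D$ of Corollary~\ref{cor:afterrestriction} to bring it back down to $D$. First I construct a map $\delta\colon \DDB^{i-1}_D\to \Omega^i_X|_D$ that restricts to $\wedge dg$ on $\Omega^{i-1}_D$. Then I lift $\delta$ through $u\colon K\to\Omega^i_X|_D$ using Lemma~\ref{lem:coneFact1}. Finally I check that the lift recovers $\gamma$.

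\emph{Construction of $\delta$.} On $Y$, consider the sheaf map
\[
\Omega^{i-1}_Y \xrightarrow{\wedge d(f^*g)} \Omega^i_Y \xrightarrow{\rm res} \Omega^i_Y|_{f^*D}.
\]
Locally $f^*g=u x_1^{a_1}\cdots x_m^{a_m}$ with $E=V(x_1\cdots x_m)$, so Lemma~\ref{lem:MirceaCalculation} (with $k=i$) shows that this map kills $\Omega^{i-1}_Y(\log E)(-E)$. By Lemma~\ref{lem:basic-sesOmega} it therefore factors through
\[
\Omega^{i-1}_Y/\Omega^{i-1}_Y(\log E)(-E)\cong \Omega^{i-1}_E/{\rm tor}\cong\DDB^{i-1}_E ,
\]
which gives a sheaf map $w\colon \DDB^{i-1}_E\to \Omega^i_Y|_{f^*D}$. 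I then set
\[
\delta:=\alpha_D\circ Rf_*w\circ f^*\colon\ \DDB^{i-1}_D\to Rf_*\DDB^{i-1}_E\to Rf_*(\Omega^i_Y|_{f^*D})\to \Omega^i_X|_D .
\]

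\emph{Restriction of $\delta$ to $\Omega^{i-1}_D$.} The pullback $f^*$ on forms commutes with $\wedge dg$ and with restriction, so $Rf_*w\circ f^*\circ{\rm can}=f^*\circ(\wedge dg)$ as maps $\Omega^{i-1}_D\to Rf_*(\Omega^i_Y|_{f^*D})$. Since $\alpha_D\circ f^*={\rm id}$, this gives
\[
\delta\circ{\rm can}=\wedge dg=u\circ\gamma .
\]

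\emph{Vanishing of $\phi_D\circ\delta$.} By the second diagram of Corollary~\ref{cor:afterrestriction},
\[
\phi_D\circ\delta=\beta\circ Rf_*(\psi_D\circ w)\circ f^* .
\]
So it suffices to show that $\psi_D\circ w=0$, or equivalently that the sheaf map $\Omega^{i-1}_Y\to\Omega^i_E/{\rm tor}$, $\eta\mapsto(\eta\wedge d(f^*g))|_E$, vanishes. Since $\Omega^i_E/{\rm tor}$ is torsion-free on the snc divisor $E$, this can be checked on each component $E_1=V(x_1)$ at its generic points. There
\[
d(f^*g)=x_1^{a_1}\,d(\cdots)+a_1u\,x_1^{a_1-1}x_2^{a_2}\cdots\, dx_1 .
\]
The first term vanishes on $E_1$. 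The second term vanishes on $E_1$ if $a_1\ge2$, and if $a_1=1$ it is a multiple of $dx_1$, which is zero in $\Omega^1_{E_1}$. Hence $\phi_D\circ\delta=0$, and Lemma~\ref{lem:coneFact1} applied to the triangle (\ref{eq:KExactTriangle}) gives $\underline{\gamma}\colon\DDB^{i-1}_D\to K$ with $u\circ\underline{\gamma}=\delta$.

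\emph{Recovering $\gamma$.} Put $\epsilon:=\underline{\gamma}\circ{\rm can}-\gamma$. Then $u\circ\epsilon=0$, so by Lemma~\ref{lem:coneFact1} for the rotated triangle $\DDB^i_D[-1]\to K\to\Omega^i_X|_D$, the map $\epsilon$ factors through $\DDB^i_D[-1]$. Now $\Omega^{i-1}_D$ is a sheaf and $\DDB^i_D[-1]\in D^{\geq1}$, so $\Hom(\Omega^{i-1}_D,\DDB^i_D[-1])=0$. Hence $\epsilon=0$, i.e.\ $\gamma=\underline{\gamma}\circ{\rm can}$. This is consistent with the uniqueness of $\gamma$ noted in Remark~\ref{rem:gammaUnique}.

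The step I expect to be the main obstacle is the compatibility $Rf_*w\circ f^*\circ{\rm can}=f^*\circ(\wedge dg)$ in the derived category. It needs $f^*\colon\DDB^{i-1}_D\to Rf_*\DDB^{i-1}_E$ to be compatible with the canonical maps from $\Omega^{i-1}_D$ and $\Omega^{i-1}_Y$. It also needs $\wedge dg$ to be well defined on $\Omega^{i-1}_D$ and on $\Omega^{i-1}_Y|_{f^*D}$, which holds because $dg\wedge dg=0$ and $g\,\Omega^{i-1}\wedge dg$ lies in $g\,\Omega^i$. I would verify this at the level of sheaves, with all maps involved being maps of genuine sheaves before applying $Rf_*$.
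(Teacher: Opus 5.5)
Your proof is correct, but it is organised differently from the paper's.

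\textbf{The paper's route.} It maps \emph{out of} a cone. It uses Lemma~\ref{lem:coneFact1} to build $s\colon \Omega^{i-1}_X\to K$ and $t\colon \Omega^{i-1}_Y\to\ker(\psi_D)$ lifting $\wedge dg$. It takes the comparison map $Rf_*\ker(\psi_D)\to K$ from (\ref{eq:BigDiag2}) and checks on $\cH^0$ that $s$ equals that map composed with $Rf_*t\circ f^*$. It deduces $s\circ\tau=0$ from Lemma~\ref{lem:MirceaCalculation}. Finally it descends $s$ along the presentation $\DDB^{i-1}_D\cong{\rm Cone}(\tau)$ (Lemma~\ref{lem:DDBProperty} in degree $i-1$), using Lemma~\ref{lem:coneFact2}.

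\textbf{Your route.} You map \emph{into} the cocone $K$. Lemmas~\ref{lem:MirceaCalculation} and \ref{lem:basic-sesOmega} turn $\wedge d(f^*g)$ into an honest sheaf map $w$ out of $\DDB^{i-1}_E=\Omega^{i-1}_E/{\rm tor}$. So $\delta=\alpha_D\circ Rf_*w\circ f^*$ needs only the functorial pullback $\DDB^{i-1}_D\to Rf_*\DDB^{i-1}_E$, and $\beta$ (in degree $i$) gives $\phi_D\circ\delta=0$.

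\textbf{What each buys.} Your version avoids the presentation of $\DDB^{i-1}_D$ as a cone, the auxiliary maps $s,t$, and the comparison map $Rf_*\ker(\psi_D)\to K$. Your last step uses $\Hom(\Omega^{i-1}_D,\DDB^i_D[-1])=0$ to identify $\underline{\gamma}\circ{\rm can}$ with $\gamma$. This in fact reproves the uniqueness of $\gamma$ without Remark~\ref{rem:gammaUnique}, so the bound on $i$ is used only to have the Koszul triangle in (\ref{eq:keydiagramChar0}). In exchange, the paper's route produces $\underline{\gamma}$ together with the identity $\underline{\gamma}\circ\phi=s$ on $\Omega^{i-1}_X$, whereas yours comes with $u\circ\underline{\gamma}=\delta$. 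The essential inputs are the same in both: the splitting $\alpha$, the map $\beta$, and Lemma~\ref{lem:MirceaCalculation}.

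\textbf{Your flagged obstacle.} The step you expected to be hardest is harmless. Both sides are maps from the sheaf $\Omega^{i-1}_D$ to $Rf_*(\Omega^i_Y|_{f^*D})\in D^{\geq 0}$. By Lemma~\ref{lem:factorisationH0Triangulated} it suffices to compare them on $\cH^0$. There it reduces to naturality of ${\rm can}$ and the identity $f^*\eta\wedge d(f^*g)=f^*(\eta\wedge dg)$. Similarly, $\psi_D\circ w=0$ needs no local computation: $f^*g$ vanishes on $E$, so $d(f^*g)$ already maps to $0$ in $\Omega^1_E$.
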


\begin{proof}
Recall that we denote the scheme-theoretic inverse image of $D$ by $f^*D$  and its reduction by $E$. Also, recall that we have a surjective restriction map
\[
\psi_D \colon \Omega^i_Y|_{f^*D} \xrightarrow{{\rm res}} \Omega^i_E \xrightarrow{{\rm can}}  \DDB^i_E = \Omega^i_E/{\rm tor}
\]
(cf.\ Lemma \ref{lem:basic-sesOmega}), which fits inside a short exact sequence:
\[
0 \to {\rm ker}(\psi_D) \xrightarrow{v} \Omega^i_Y|_{f^*D} \xrightarrow{\psi_D} \Omega^i_E/{\rm tor} \to 0.
\]
In the proof of the proposition we will use the following two commutative diagrams:
\begin{equation} \label{eq:BigDiag1}
\begin{tikzcd}[column sep = large]
\hphantom{a} & \arrow{l}{+1} Rf_*\DDB^i_{E} & \arrow{l}{Rf_*\psi_D} Rf_*(\Omega^i_Y|_{f^*D})  & \arrow{l}{Rf_*v} Rf_*\ker(\psi_D)  \\
\hphantom{a} & \arrow{l}{+1} \DDB^i_D \arrow{u}{f^*} & \arrow{l}{\phi_D} \Omega^i_X|_D \arrow{u}{f^*} & \arrow{l}{u} K. \arrow[dashed]{u} 
\end{tikzcd}
\end{equation}
\begin{equation} \label{eq:BigDiag2}
\begin{tikzcd}[column sep = large]
\hphantom{a} & \arrow{l}{+1} Rf_*\DDB^i_{E} \ar{d}{\beta} & \arrow{l}{Rf_*\psi_D} Rf_*(\Omega^i_Y|_{f^*D}) \ar{d}{\alpha_D} & \arrow{l}{Rf_*v} Rf_*\ker(\psi_D) \ar[dashed]{d}  \\
\hphantom{a} & \arrow{l}{+1} \DDB^i_D  & \arrow{l}{\phi_D} \Omega^i_X|_D  & \arrow{l}{u} K.  
\end{tikzcd}
\end{equation}
The left squares come from Corollary \ref{cor:afterrestriction}. The existence of the maps between $K$ and $Rf_*{\rm ker}(\psi)$ follows from the third axiom of triangulated categories.


Next, we construct maps $s \colon \Omega^{i-1}_X \to K$ and $t \colon \Omega^{i-1}_Y \to {\rm ker}(\psi_D)$, which render the following diagram commutative:
\begin{equation} \label{eq:almostBiggestDiagram}
\begin{tikzcd}[column sep = large]
  Rf_*\DDB^i_{E}  & \arrow{l}{Rf_*\psi_D} Rf_*(\Omega^i_Y|_{f^*D})  & \arrow{l}{Rf_*v} Rf_*\ker(\psi_D)  & \arrow[dashed]{l}{Rf_*t} Rf_*\Omega_Y^{i-1} \ar[bend right=20]{ll}[swap]{\wedge d(g \circ f)}  \\
 \DDB^i_D \arrow{u}{f^*} & \arrow{l}{\phi_D}  \Omega^i_X|_D \arrow{u}{f^*} & \arrow{l}{u} K \arrow{u} &  \arrow{u}{f^*} \arrow[dashed]{l}{s} \Omega_X^{i-1}. \ar[bend left = 20]{ll}[swap]{\wedge dg} 
\end{tikzcd}
\end{equation}
By Lemma \ref{lem:coneFact1}, to construct the dashed arrows $s$ and $t$ in this diagram, it is enough to show that the compositions
\begin{align*}
&\Omega^{i-1}_X \xrightarrow{\wedge dg} \Omega^i_X|_D \xrightarrow{{\rm res}} \Omega^i_D \xrightarrow{{\rm can}} \DDB^i_D, \text{ and } \\
&\Omega^{i-1}_Y \xrightarrow{\wedge d(g \circ f)} \Omega^i_Y|_{f^*D} \xrightarrow{{\rm res}} \Omega^i_E \xrightarrow{{\rm can}} \DDB^i_E
\end{align*}
are zero, which is clear. Then, what is left, is showing that the rightmost square commutes. But in view of Lemma \ref{lem:factorisationH0Triangulated}, since $\Omega^{i-1}_X$ is a sheaf and $Rf_*\ker(\psi_D) \in D^{\geq 0}$, it is enough to verify the commutativity after applying $\cH^0$, in which case this is clear,
as the formations of $f^*$ and $\wedge dg$ commute with one another, and $\cH^0(K)$ and $f_*\ker(\psi_D)$ inject into $\Omega^i_X|_D$ and $f_*(\Omega^i_Y|_{f^*D})$, respectively.

Now, putting everything together, we can consider the following commutative diagram in the derived category of coherent sheaves $D^b_{\rm coh}(X)$:
\[
\begin{tikzcd}[column sep = 3em, row sep = large]
 Rf_*(\Omega^i_Y|_{f^*D})  & \arrow{l}{Rf_*v} Rf_*\ker(\psi_D)  & \arrow{l}{Rf_*t} Rf_*\Omega_Y^{i-1}   \ar[bend right=30]{ll}[swap]{\wedge d(g \circ f)}  & \ar{l}{ Rf_*\tau_Y} \ar[dashed, bend right = 30]{ll}[swap]{0} Rf_*\Omega^{i-1}_Y(\log E)(-E) \\
 \Omega^i_X|_D \arrow{u}{f^*} & \arrow{l}{u} K \arrow{u} &  \arrow{u}{f^*} \arrow{l}{s} \Omega_X^{i-1} \ar[bend left = 30]{ll}[swap]{\wedge dg} & \ar{l}{\tau} Rf_*\Omega^{i-1}_Y(\log E)(-E) \ar{u}{=}.
\end{tikzcd}
\]
The squares are commutative by Lemma \ref{lem:DDBProperty} and Diagram (\ref{eq:almostBiggestDiagram}). We shall verify that the dashed arrow in the above diagram is zero. To this end, we observe that by Lemma \ref{lem:MirceaCalculation}, the composition:
\[
\begin{tikzcd}
\Omega^{i-1}_Y(\log E)(-E)  \ar[hook]{r}{\tau_Y} & \Omega^{i-1}_Y \ar{r}{t} \ar[bend right = 25]{rr}{\wedge dg} & {\rm ker}(\psi_D) \ar[hook]{r}{v} & \Omega^i_Y|_{f^*D}
\end{tikzcd}
\]
is zero. Hence, the composite map $t \circ \tau_Y \colon \Omega^{i-1}_Y(\log E)(-E) \to \ker(\psi_D)$ is zero, and the same holds true after applying $Rf_*$, thereby justifying that the dashed arrow 
\[
Rf_*(t \circ \tau_Y) \colon Rf_*\Omega^{i-1}_Y(\log E)(-E) \to Rf_*{\rm ker}(\psi_D)
\]
in the above diagram is indeed zero. 
\begin{claim}
The composition
\[
K \xleftarrow{\ s \ } \Omega^{i-1}_X \xleftarrow{\ \tau \ } Rf_*\Omega^{i-1}_Y(\log E)(-E)
\]
is zero.
\end{claim}
\begin{proof}
Recall that $\cH^0(K) \cong \Omega^{i-1}_D$ (cf.\ Remark \ref{rem:gammaUnique}). To prove the claim it is enough to justify that the following diagram, where the leftmost map comes from (\ref{eq:BigDiag2}), is commutative:
\[
\begin{tikzcd}[column sep = large]
 Rf_*\ker(\psi_D) \ar{d} & \arrow{l}{Rf_*t} Rf_*\Omega_Y^{i-1}  & \ar{l}{Rf_*\tau_Y} \ar[dashed, bend right = 20]{ll}[swap]{0} Rf_*\Omega^{i-1}_Y(\log E)(-E) \\
 K  &  \arrow{u}{f^*} \arrow{l}{s} \Omega_X^{i-1}  & \ar{l}{\tau} Rf_*\Omega^{i-1}_Y(\log E)(-E) \ar{u}{=}.
\end{tikzcd}
\]
Here, the rightmost square is commutative by Lemma \ref{lem:DDBProperty} and the leftmost square is commutative by Lemma \ref{lem:factorisationH0Triangulated}, given that it is so after the application of $\cH^0$. Namely, in this case the diagram specialises to:
\[
\begin{tikzcd}[column sep = large]
f_*(\Omega^i_Y|_{f^*D}) \ar[dashed]{d}{\text{splitting}} & f_*\ker(\psi_D) \ar[dashed, hook']{l}{f_*v} \ar{d} & \ar{l}{f_*t} \ar[dashed, bend right = 20]{ll}[swap]{\wedge 
{ d(g \circ f)}} f_*\Omega^{i-1}_Y  \\
\Omega^i_X|_D & \ar[dashed, hook']{l}{u} \underbrace{\cH^0(K)}_{\cong\, \Omega^{i-1}_D} & \ar{l}{s}   \ar[dashed, bend left = 35]{ll}{\wedge dg} \ar{u}{f^*} \Omega^{i-1}_X.
\end{tikzcd}
\]
which is clearly commutative as $f^*\eta \wedge {d(g \circ f)} = f^*(\eta \wedge dg)$ for any local form $\eta$ in $\Omega^{i-1}_X$. Here, the dashed arrows in the diagram are added to justify the commutativity.

\end{proof}

Recall that $\DDB^{i-1}_D \cong {\rm Cone}(\tau)$ by  Lemma \ref{lem:DDBProperty}.  Thus, by the above claim and  Lemma \ref{lem:coneFact2}, we get the existence of a map $\underline{\gamma} \colon \DDB^{i-1}_D \to K$
fitting inside the following commutative diagram:
\begin{equation} \label{eq:almostLastDiagram}
\begin{tikzcd}[column sep = large]
 K  & \ar{l}{s} \Omega^{i-1}_X \ar{dl}{\phi} &  \ar{l}{\tau} Rf_*\Omega^{i-1}_Y(\log E)(-E)   \\
 \DDB^{i-1}_D. \ar[dashed]{u}{\underline \gamma} & & 
\end{tikzcd}
\end{equation}
Recall that $\phi$ factorises into 
\[
\Omega^{i-1}_X \xrightarrow{\rm res} \Omega^{i-1}_D \xrightarrow{\rm can} \DDB^{i-1}_D,
\]
and we thus get the following commutative diagram:
\[
\begin{tikzcd}[column sep = large]
 K  & \ar{l}{s} \Omega^{i-1}_X  \ar{d}{\rm res}   \\
 \DDB^{i-1}_D \ar{u}{\underline \gamma} &  \ar{l}{\rm can}\Omega^{i-1}_D \ar[dashed]{ul}{\gamma}.
\end{tikzcd}
\]
The commutativity of (\ref{eq:almostLastDiagram}) and the definition of $s$ from (\ref{eq:almostBiggestDiagram}) insure that $\cH^0(\underline{\gamma}) = \cH^0(\gamma)$, and so the dashed arrow in the above diagram is indeed equal to $\gamma$ thanks to Lemma \ref{lem:factorisationH0Triangulated} (cf.\ Remark \ref{rem:gammaUnique}). Therefore, we have a factorisation
\[
\gamma \colon \Omega^{i-1}_D \xrightarrow{\rm can} \DDB^{i-1}_D \xrightarrow{\underline{\gamma}} K
\]
as required.  \qedhere
\end{proof}

We are ready to give the proof of our main theorem in characteristic zero. In fact, we shall prove a more general statement. 

\begin{theorem}[cf. Theorem \ref{thm:mainchar0}] \label{thm:mainStrongest}
 Let $D$ be a normal hypersurface of dimension $d$ in a smooth variety $X = \Spec R$ over $\bC$ and let $m$ be a positive integer. If {$m \leq {\rm codim}_D\, {\rm Sing}(D) -2$} and the map
 \[
 {\rm can} \colon H^{d-m}_\m(\Omega^{m}_D)  \to H^{d-m}_\m(\DDB^m_D) 
 \] 
 is injective for every closed point $\m \in D$, then $D$ is $m$-Du-Bois.
\end{theorem}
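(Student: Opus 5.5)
The plan is to prove, by descending induction on $i$ from $i=m$ down to $i=0$, that
\[
{\rm can} \colon H^{d-i}_\m(\Omega^{i}_D) \to H^{d-i}_\m(\DDB^i_D)
\]
is injective at every closed point $\m$. We then feed this into Theorem~\ref{thm:SplitKW}. Since $i \leq m \leq {\rm codim}_D\,{\rm Sing}(D)-2$ for every $i$ in the range, Proposition~\ref{prop:hypesurfaceVanishing} gives $H^j_\m(\Omega^i_D)=0$ for $i+j<d$. By Remark~\ref{rem:gammaUnique} we also have $\Omega^i_D \cong \cH^0(\DDB^i_D)\cong\Omega^{[i]}_D$. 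Hence the criterion of Theorem~\ref{thm:SplitKW} reduces exactly to injectivity in degree $j=d-i$, which yields that $D$ is pre-$m$-Du-Bois, i.e.\ $\DDB^i_D\cong \Omega^i_D$ for $0\le i\le m$. We may localise at $\m$ and assume $D=V(g)$ with $X=\Spec R$, so that the setup of Proposition~\ref{prop:keydiagramchar0} applies.

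For the inductive step, fix $1\le i\le m$ and assume the injectivity for $i$. Applying local cohomology to the map of triangles (\ref{eq:keydiagramChar0}) gives a commutative square
\[
\begin{tikzcd}
H^{d-i}_\m(\DDB^i_D) \ar{r}{\delta'} & H^{d-i+1}_\m(K) \\
H^{d-i}_\m(\Omega^i_D) \ar{u}{\rm can} \ar{r}{\delta} & H^{d-i+1}_\m(\Omega^{i-1}_D). \ar{u}{H(\gamma)}
\end{tikzcd}
\]
Since $i\ge1$, we have $d-i<d$, so $H^{d-i}_\m(\Omega^i_X|_D)=0$ because $D$ is Cohen--Macaulay. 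Therefore both connecting maps $\delta$ and $\delta'$ are injective.

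Since both sides are $\m$-power torsion, injectivity of ${\rm can}\colon H^{d-i+1}_\m(\Omega^{i-1}_D)\to H^{d-i+1}_\m(\DDB^{i-1}_D)$ can be tested on the socle. By Proposition~\ref{prop:hypesurfaceVanishing}, this socle is $\cong R/\m$, and so is the socle of $H^{d-i}_\m(\Omega^i_D)$. As $\delta$ is injective, $\delta$ carries the nonzero socle element $x$ of the latter to a generator of the former. Then
\[
H(\gamma)(\delta x)=\delta'({\rm can}(x))\neq 0,
\]
using the inductive hypothesis and the injectivity of $\delta'$. Now Proposition~\ref{prop:keydiagramchar0} factors $\gamma=\underline\gamma\circ{\rm can}$, so ${\rm can}(\delta x)\neq 0$ in $H^{d-i+1}_\m(\DDB^{i-1}_D)$. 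This completes the induction. The case $i=m$ is the hypothesis.

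The remaining and, I expect, hardest point is upgrading pre-$m$-Du-Bois to $m$-Du-Bois, namely showing ${\rm codim}_D\,{\rm Sing}(D)\ge 2m+1$. The hypothesis only gives ${\rm codim}\ge m+2$. I would first try to invoke the known result for hypersurfaces (and lci varieties) in the cited literature \cite{MOPW23,JKSY22}: if $\Omega^i_D\to\DDB^i_D$ is an isomorphism for all $0\le i\le m$, then the codimension bound $2m+1$ holds automatically. This is the statement recalled in the introduction that for lci singularities the $m$-Du-Bois property is equivalent to these isomorphisms. If a direct argument is needed instead, I would compare $\Omega^m_D\cong\DDB^m_D$ with the duality/vanishing properties of $\DDB^m_D$ (depth of $\DDB^{d-m}$ versus the non-reflexivity of $\Omega^{j}_D$ for $j$ near ${\rm codim}\,{\rm Sing}(D)$) to force the bound.
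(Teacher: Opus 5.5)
Your proposal is correct and follows essentially the same route as the paper: descending induction through the local cohomology of the map of triangles (\ref{eq:keydiagramChar0}), the factorisation $\gamma = \underline{\gamma}\circ{\rm can}$ from Proposition~\ref{prop:keydiagramchar0}, and Theorem~\ref{thm:SplitKW} to get pre-$m$-Du-Bois. Like the paper, which says it suffices to prove pre-$m$-Du-Bois, you take the codimension bound $2m+1$ from the cited hypersurface literature; the one difference is cosmetic, namely that you test injectivity on the one-dimensional socles, whereas the paper applies the four lemma (with the identity on $H^{d-i+1}_\m(\Omega^i_X|_D)$) to get injectivity of $H^{d-i+1}_\m(\gamma)$ on the whole module.
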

\begin{proof}
We may assume that $D = V(g)$ for $g \in R$. Throughout the proof, we will implicitly use the fact  $\Omega^i_D$ is reflexive for $i\leq m$ (cf.\ Proposition \ref{prop:hypesurfaceVanishing}). 
In particular, given that $\cH^0(\DDB^i_D)$ is torsion free (see (3) at the beginning of Section~\ref{ss:preliminaries}) and the restriction of the canonical morphism
$\Omega_D^i\to {\mathcal H}^0(\underline{\Omega}_D^i)$ to the smooth locus of $D$ is an isomorphism, it follows that $\Omega_D^i\simeq {\mathcal H}^0(\underline{\Omega}_D^i)$
for all $i \leq m$. In particular, to prove the theorem it is enough to argue that $D$ is pre-$m$-Du-Bois.

Let $f \colon Y \to X$ be a log resolution of $(X,D)$ that is an isomorphism over $X\smallsetminus D$, and let $E = f^{-1}(D)_{\rm red}$. We will argue by descending induction on $i \leq m+1$ that  for every closed point $\m\in D$, the natural map
\begin{equation} \label{eq:goal}
 {\rm can} \colon  H^{d-i+1}_\m(\Omega^{i-1}_D)  \to H^{d-i+1}_\m(\DDB^{i-1}_D) 
\end{equation}
is injective. This immediately concludes the proof of the theorem given the characterisation of  pre-$m$-Du-Bois singularities from Theorem \ref{thm:SplitKW} and the vanishing $H^j_\m(\Omega^{i}_D)=0$ for $i \leq m$ and $i+j < d$  from Proposition \ref{prop:hypesurfaceVanishing}.

The base case $i=m+1$ of our goal $(\ref{eq:goal})$ follows by our assumption. Thus, in order to prove that the above map (\ref{eq:goal}) is injective, we may assume that $1 \leq i \leq m$ and the natural map
\begin{equation} \label{eq:indassump}
 {\rm can} \colon  H^{d-i}_\m(\Omega^{i}_D)  \to H^{d-i}_\m(\DDB^{i}_D) 
\end{equation}
is injective. 


With the notation as in Proposition \ref{prop:keydiagramchar0}, set 
\[
K = {\rm Cocone}(\phi_D \colon \Omega^{i}_X|_D \to \DDB^{i}_D).
\]
Fix $j := d-i$ and consider the following long exact sequence of local cohomology coming from (\ref{eq:keydiagramChar0}):
\[
\begin{tikzcd}
\mathllap{0 = }\ H^{j}_\m(\Omega^{i}_X|_D) \ar{r} &  H^{j}_\m(\DDB^{i}_D) \ar{r} &  H^{j+1}_\m(K) \ar{r} & H^{j+1}_\m(\Omega^{i}_X|_D)  \\
\mathllap{0 = }\ H^{j}_\m(\Omega^{i}_X|_D) \ar{u}{=} \ar{r} &  H^{j}_\m(\Omega^{i}_D) \ar{u}{\rm can}[swap]{(\star)} \ar{r} &  H^{j+1}_\m(\Omega^{i-1}_D)  \ar{u}{\gamma}[swap]{(\star\star)} \ar{r} & H^{j+1}_\m(\Omega^{i}_X|_D).  \ar{u}[swap]{=}
\end{tikzcd}
\]
Here $H^{j}_\m(\Omega^{i}_X|_D)=0$, because $\Omega^{i}_X$ is locally free, $j< d$, and $D$, as a hypersurface in a regular variety, is Cohen-Macaulay. Now, since the map $(\star)$ is injective by the inductive assumption (\ref{eq:indassump}), we get that $(\star\star)$ is injective as well by tracing through the above diagram. 

The injectivity of $(\star\star)$ and the factorisation from Proposition \ref{prop:keydiagramchar0} then implies that the natural map (\ref{eq:goal}):
\[
 {\rm can} \colon H^{j+1}_\m(\Omega^{i-1}_D)  \to H^{j+1}_\m(\DDB^{i-1}_D) 
\]
is injective. This concludes the proof by induction.
\end{proof}

\begin{proof}[{Proof of Theorem \ref{thm:mainchar0}}]
Since $m \leq {\rm codim}_D\, {\rm Sing}(D) -2$ and $\DDB^m_D \cong \Omega^m_D$, the assumptions of Theorem \ref{thm:mainStrongest} are satisfied, and so $D$ is $m$-Du-Bois.  \qedhere

\end{proof}

\section{Proof of the main theorem in characteristic $p>0$} \label{ss:char0}

We start this section with the following lemma.

\begin{lemma} \label{lem:charpDiagram}
Let $D \subseteq X$ be a hypersurface in a smooth affine variety $X = \Spec R$ defined over a perfect field of characteristic $p>0$. If $D = V(f)$, for some $f \in R$, then we have a commutative diagram
\begin{equation} \label{eq:charpDiagramNew}
\begin{tikzcd}
 G\Omega^{i-1}_D \ar{r}{\sigma} & G\Omega^i_X|_D \ar{r}{\rm res} & G\Omega^i_D  \\
\Omega^{i-1}_D \ar{r}{\wedge df} \ar{u}{C^{-1}} & \Omega^i_X|_D 
\ar{r}{\rm res} \ar{u}{C^{-1}} & \Omega^i_D, \ar{u}{C^{-1}}
\end{tikzcd}
\end{equation}
in which the top row is a complex: ${\rm res} \circ \sigma = 0$.
The definitions of the maps in the top row will be explained in the proof. 
\end{lemma}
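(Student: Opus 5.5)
The plan is to build the top row of (\ref{eq:charpDiagramNew}) directly, guided by the identity $C^{-1}(df)=f^{p-1}df$. Throughout, $G\Omega^i_X|_D$ means $G\Omega^i_X\otimes_{\cO_X}\cO_D$. Since $\cO_X$ acts on $F_*$ through Frobenius, this is
\[
G\Omega^i_X|_D \;=\; \frac{F_*\Omega^i_X}{B\Omega^i_X + f^p\,F_*\Omega^i_X}.
\]
The first step is to define the maps in the top row.

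\begin{enumerate}
\item[(a)] The map ${\rm res}\colon G\Omega^i_X|_D\to G\Omega^i_D$ is induced by the restriction $F_*\Omega^i_X\to F_*\Omega^i_D$. It is well defined because restriction commutes with $d$, so $B\Omega^i_X$ maps into $B\Omega^i_D$, and $f^p\omega$ maps to $0$ since $f=0$ on $D$.
\item[(b)] The map $\sigma\colon G\Omega^{i-1}_D\to G\Omega^i_X|_D$ sends the class of $\eta\in F_*\Omega^{i-1}_D$ to the class of $\tilde\eta\wedge f^{p-1}df$, where $\tilde\eta\in\Omega^{i-1}_X$ is any lift of $\eta$.
\end{enumerate}

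The second step, and the main one, is checking that $\sigma$ is well defined. We have $\Omega^{i-1}_D=\Omega^{i-1}_X/(f\Omega^{i-1}_X+df\wedge\Omega^{i-2}_X)$, so there are three kinds of ambiguity to control.

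\begin{enumerate}
\item[(i)] Changing the lift by $f\xi$ changes the image by $f^p\,\xi\wedge df$, which lies in $f^pF_*\Omega^i_X$.
\item[(ii)] Changing the lift by $df\wedge\xi$ changes the image by $df\wedge\xi\wedge f^{p-1}df=0$.
\item[(iii)] Elements of $B\Omega^{i-1}_D$ are of the form $d\xi$, and we may take the lift $d\tilde\xi$. Since $d(f^{p-1}df)=0$, we get $d\tilde\xi\wedge f^{p-1}df=d(\tilde\xi\wedge f^{p-1}df)\in B\Omega^i_X$.
\end{enumerate}

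Additivity is clear. Once $\sigma$ is well defined, the complex property ${\rm res}\circ\sigma=0$ is immediate: $f^{p-1}df$ restricts to zero in $\Omega^1_D$, because $df=0$ there and $p\geq 2$.

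The third step is commutativity of the two squares.

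\begin{enumerate}
\item[(a)] Middle vertical map. It is $C^{-1}\colon\Omega^i_X\to G\Omega^i_X$ tensored with $\cO_D$. This is legitimate because $C^{-1}$ is $p$-linear: $C^{-1}(f\omega)=f^pC^{-1}(\omega)$, so $f\Omega^i_X$ lands in $f^pF_*\Omega^i_X$.
\item[(b)] Right square. It commutes by functoriality of the construction of $C^{-1}$ (via $\gamma(r)=r^{p-1}dr$ and exterior powers) under the surjection $R\to R/f$.
\item[(c)] Left square. Here we use that $C^{-1}\colon\Omega^\bullet\to Z\Omega^\bullet/B\Omega^\bullet$ is multiplicative, being defined by exterior powers of $\gamma$. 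Hence, on $X$,
\[
C^{-1}(\tilde\eta\wedge df)=C^{-1}(\tilde\eta)\wedge f^{p-1}df \pmod{B\Omega^i_X}.
\]
The product is well defined modulo boundaries because a closed form wedged with an exact form is exact. Now $C^{-1}(\eta)$ on $D$ is represented by the restriction of $C^{-1}(\tilde\eta)$. Applying $\sigma$ to it gives exactly $C^{-1}(\tilde\eta)\wedge f^{p-1}df$, which is the image of $\tilde\eta\wedge df$ under the middle vertical map.
\end{enumerate}

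I expect the main care to be needed in bookkeeping the Frobenius-twisted module structure, so that the quotients by $f$ versus $f^p$ match correctly, and in the well-definedness of $\sigma$ modulo $B\Omega^{i-1}_D$. Both reduce to the identity $d(f^{p-1}df)=0$.
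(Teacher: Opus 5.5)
Your proposal is correct and follows essentially the same route as the paper. The paper uses the same definitions of ${\rm res}$ and of $\sigma$ (wedge a lift with $f^{p-1}df$). Your three checks (i)--(iii) are the paper's two cokernel factorisations (through $\cdot F_*f$, then through $\wedge df$) followed by its boundary check $d\eta\wedge f^{p-1}df=d(\eta\wedge f^{p-1}df)$, and the left square commutes in both arguments by multiplicativity of $C^{-1}$ together with $C^{-1}(df)=f^{p-1}df$.
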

\noindent Note that under additional assumptions on the singular locus of $D$, the lower row extends to a short exact sequence (see Lemma \ref{LemmaKoszul}), but the upper row will almost never be exact. 
\begin{proof}
Let us define the maps in the top row of the diagram. \\

\noindent \textbf{The restriction map}. First, the map 
\[
{\rm res} \colon G\Omega^i_X|_D \to G\Omega^i_D
\]
is induced by the natural restriction map $F_*\Omega^i_X \to F_*\Omega^i_D$,
which sends $B\Omega^i_X$ to $B\Omega^i_D$ by the definition of boundaries.\\

\noindent \textbf{The map $\sigma$}. Next, the map 
\begin{equation} \label{eq:sigmamap}
\sigma \colon G\Omega^{i-1}_D \to G\Omega^{i}_X|_D
\end{equation}
is induced by the map $F_*\Omega^{i-1}_X \to F_*\Omega^{i}_X$ given by wedging by $F_*f^{p-1}df$. To construct $\sigma$ formally, we first consider the following diagram with the upper row exact
\[
\begin{tikzcd}
  (F_*\Omega^{i-1}_X)|_D \ar{rd}{0} \ar{r}{\cdot F_*f} & (F_*\Omega^{i-1}_X)|_D \ar{r} \ar{d}{\wedge F_*f^{p-1}df} &  F_*(\Omega^{i-1}_X|_D) \ar[dashed, bend left = 30]{dl}{\varsigma} \ar{r} & 0 \\ 
  & (F_*\Omega^i_X)|_D & &
\end{tikzcd}
\]
where the diagonal map is zero, because $F_*f \wedge F_*f^{p-1}df = fF_*df$ which maps to zero in $(F_*\Omega^i_X)|_D$. From this diagram, we infer the existence of the dashed map
\[
\varsigma \colon F_*(\Omega^{i-1}_X|_D) \to (F_*\Omega^i_X)|_D.
\]
In turn, this map fits in the following diagram:
\[
\begin{tikzcd}
  F_*\Omega^{i-2}_D \ar{rd}{0} \ar{r}{\wedge F_*df} & F_*(\Omega^{i-1}_X|_D) \ar{r} \ar{d}{\varsigma} &  F_*\Omega^{i-1}_D \ar[dashed, bend left = 30]{dl}{\sigma} \ar{r} & 0. \\ 
  & (F_*\Omega^i_X)|_D & &
\end{tikzcd}
\]
with the top row being exact by definition of sheaves of differential forms. Here the diagonal map is zero because $df \wedge df = 0$. This yields:
\[
\sigma \colon F_*\Omega^{i-1}_D \to (F_*\Omega^i_X)|_D.
\]
Finally, we claim that this map sends $B\Omega^{i-1}_D$ into $(B\Omega^i_X)|_D$; granted that we get the sought-after map  (\ref{eq:sigmamap}). To prove the claim, choose a local differential form $\eta$ in $\Omega^{i-2}_X$. Then
\[
d\eta \wedge f^{p-1}df = d(\eta \wedge f^{p-1}df),
\]
which immediately implies the claim as every form in $B\Omega^{i-1}_D$ lifts to a form in $B\Omega^{i-1}_X$. 

By construction, the top row in Diagram \ref{eq:charpDiagramNew} is a complex: we have ${\rm res} \circ \sigma = 0$.
\\

\noindent \textbf{Commutativity of squares}. The right square in Diagram (\ref{eq:charpDiagramNew}) is clearly commutative by definition of the inverse Cartier operator. The commutativity of the left square follows since $C^{-1}$ respects wedge product, which is one of its defining properties:
\[
C^{-1}(\eta \wedge df) = C^{-1}(\eta) \wedge C^{-1}(df) = C^{-1}(\eta) \wedge f^{p-1}df. \qedhere
\]
\end{proof}

\begin{proof}[Proof of Theorem \ref{thm:maincharp}]
 Let us fix a closed point $\m\in D$.
As mentioned already, Proposition~\ref{prop:hypesurfaceVanishing} gives
\[
H^i_\m(\Omega^j_D) = 0
\]
for $j \leq m$ and $i+j<d$, hence we only need to consider the case when $i+j=d$. 

We argue by descending induction on $j$. Namely, we assume that
\begin{equation} \label{eq:posindassump}
C^{-1} \colon H^{d-j}_\m(\Omega^{j}_D) \to H^{d-j}_\m\left(G\Omega^j_D\right) 
\end{equation}
is injective for some $j$, with $1 \leq j \leq m$, and aim for showing that
\begin{equation} \label{eq:posindgoal}
C^{-1} \colon H^{d-j+1}_\m(\Omega^{j-1}_D) \to H^{d-j+1}_\m\left(G\Omega^{j-1}_D\right) 
\end{equation}
is injective.

We freely use the notation from Lemma \ref{lem:charpDiagram}. Let
\[
K := {\rm ker}\left(G\Omega^{j}_X|_D \xrightarrow{\rm res} G\Omega^j_D \right). 
\]
We get an induced diagram
\[
\begin{tikzcd}
0 \ar{r} & K \ar{r}{u} & G\Omega^j_X|_D \ar{r}{\rm res} & G\Omega^j_D \ar{r} & 0 \\
0 \ar{r} & \Omega^{j-1}_D \ar{r}{\wedge df} \ar{u}{\theta} & \Omega^j_X|_D 
\ar{r}{\rm res} \ar{u}{C^{-1}} & \Omega^j_D \ar{r} \ar{u}{C^{-1}}  & 0.
\end{tikzcd}
\]
with both rows being exact (see Lemma \ref{LemmaKoszul}).

By Lemma \ref{lem:charpDiagram}, we have a natural factorisation:
\[
\theta \colon \Omega^{j-1}_D \xrightarrow{C^{-1}} G\Omega^{j-1}_D \to K.
\]
Thus, to prove (\ref{eq:posindgoal}), it is enough to show that:
\begin{equation} \label{eq:posindgoal2}
\theta \colon H^{d-j+1}_\m(\Omega^{j-1}_D) \to H^{d-j+1}_\m\left(K\right) 
\end{equation}
is injective. In fact, since every  non-zero element of an $\m^\infty$-torsion module can be multiplied so that it is a non-zero element in the socle (the $\m$-torsion submodule), it is enough to check that $\theta$ induces an injection of socles.

To this end, we consider the induced long exact sequences of local cohomology:
\[
\begin{tikzcd}
0 = H^{d-j}_\m(G\Omega^j_X|_D)  \ar{r}{\rm res} & H^{d-j}_\m(G\Omega^j_D) \ar{r} & H^{d-j+1}_\m(K) \ar{r}{u}  & H^{d-j+1}_\m(G\Omega^j_X|_D) \\
0 = H^{d-j}_\m(\Omega^j_X|_D) \ar{u}{C^{-1}} \ar{r}{\rm res} & H^{d-j}_\m(\Omega^j_D) \ar[hook]{u}{C^{-1}} \ar{r} & H^{d-j+1}_\m(\Omega^{j-1}_D) \ar{r}{\wedge dg} \ar{u}{\theta} & H^{d-j+1}_\m(\Omega^j_X|_D) \ar{u}{C^{-1}}
\end{tikzcd}
\]

The vanishings
\[
H^{d-j}_\m(G\Omega^j_X|_D) =  H^{d-j}_\m(\Omega^j_X|_D)  = 0
\]
follow from the fact that $j\geq 1$ and the sheaves  $G\Omega^j_X$ and $\Omega^j_X$ are locally free, given that $X$ is smooth (see the last paragraph of Section \ref{ss:Cartier}), while $D$ is Cohen-Macaulay. Moreover, the vertical map 
\[
C^{-1} \colon H^{d-j}_\m(\Omega^{j}_D) \to H^{d-j}_\m\left(G\Omega^j_D\right) 
\]
is an injection by the inductive assumption  (\ref{eq:posindassump}).

Now the sought-after injectivity of the $\theta$ map (\ref{eq:posindgoal2}) on socles follows via a diagram chase, using the fact that by Proposition \ref{prop:hypesurfaceVanishing}, the map
\[
{\rm Soc}(H^{d-j}_\m(\Omega^j_D)) \hookrightarrow {\rm Soc}(H^{d-j+1}_\m(\Omega^{j-1}_D))
\]
is an injection of one-dimensional $R/\m$-vector spaces, and so it is an isomorphism.
\end{proof}

\begin{remark} \label{remark:Char0vsCharpProof}
Note that we ran more or less the same argument in characteristic $0$ for the proof of Theorem \ref{thm:mainchar0}, contingent upon the construction of the following diagram:
\begin{equation} \label{eq:char0KeyDiagram}
\begin{tikzcd}[row sep = 0.6em]
& {\rm Cocone}(\DDB^i_X\!|_D \to \DDB^i_D) \ar{r} & \DDB^i_X\!|_D \ar{r} & \DDB^i_D \ar{r}{+1} & \hphantom{a} \\
\DDB^{i-1}_D \ar[dashed]{ru} & & & & \\
& \Omega^{i-1}_D \ar{ul} \ar{uu} \ar{r}{\wedge dg} & \Omega^i_X|_D \ar{uu} \ar{r}{\rm res} & \Omega^i_D \ar{uu} \ar{r}{+1} & \hphantom{a},
\end{tikzcd}
\end{equation}
which is the analogue of the characteristic $p>0$ diagram (cf.\ Lemma \ref{lem:charpDiagram}):
\[
\begin{tikzcd}[row sep = 0.6em]
& {\rm ker}(G\Omega^i_X|_D \to G\Omega^i_D) \ar{r} & G\Omega^i_X|_D \ar{r} & G\Omega^i_D \ar{r}{+1} & \hphantom{a} \\
G\Omega^{i-1}_D \ar[dashed]{ru} & & & & \\
& \Omega^{i-1}_D \ar{ul} \ar{uu} \ar{r}{\wedge dg} & \Omega^i_X|_D \ar{uu} \ar{r}{\rm res} & \Omega^i_D \ar{uu} \ar{r}{+1} & \hphantom{a}
\end{tikzcd}
\]
However, constructing (\ref{eq:char0KeyDiagram}) is more difficult due to the fact that checking factorisations in triangulated categories is significantly more demanding than in abelian categories. The hardest part of the characteristic $0$ proof was the careful construction of the dashed arrow in (\ref{eq:char0KeyDiagram}). 
\end{remark}

\section{Appendix} \label{s:appendix}

In this appendix, we study  the reflexivity and  the  local cohomology of the
 sheaves $B\Omega^i_X$ and $G\Omega^i_X$ featured in the study of higher $F$-injectivity. This is then used to compare Definition \ref{def:mFinjective} with the one in \cite{KW24}.

Throughout this section, we work under the following assumptions:

\begin{setting} \label{setting:appendix}
Let $D \subseteq X$ be a hypersurface of dimension $d$ in a smooth affine variety $X = \Spec R$ defined over a perfect field of characteristic $p>0$. Assume that $D = V(g)$ for $g \in R$. We set an integer $k := {\rm codim}_{D}\, {\rm Sing}(D) - 2$.
\end{setting}

By Proposition \ref{prop:hypesurfaceVanishing}, we have that $\Omega^j_D$ is reflexive for all $0 \leq j \leq k$. In particular, by \cite[Proposition 2.4]{KW24} the reflexivisation for the Cartier operator from the smooth locus yields a short exact sequence:
\begin{equation} \label{eq:sesCartierRefl}
0 \to B\Omega^{[j]}_D \to Z\Omega^{[j]}_D \to \Omega^{j}_D \to 0
\end{equation}
for every $0 \leq j \leq k$.  Here, the first two terms are reflexivisations of $B\Omega^j_D$ and $Z\Omega^j_D$, respectively. In fact, \cite[Proposition 2.4]{KW24} also stipulates that $Z\Omega^j_D = Z\Omega^{[j]}_D$ for all $j \leq k-1$, but we will recover an even stronger statement in the course of the proof below.
\begin{proposition} \label{prop:BZVanshingReflexive}
With assumptions as in Setting \ref{setting:appendix}, we have that $B\Omega^j_D$ and $Z\Omega^j_D$ are reflexive for all $j \leq k$. Moreover:
\begin{equation} \label{eq:goalZBvan}
H^i_\m(B\Omega^{j}_D) = H^i_\m(Z\Omega^{j}_D) = 0
\end{equation}
for all closed points $\m \in D$ whenever $j \leq k$ and $i+j < d$.
\end{proposition}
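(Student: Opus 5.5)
We plan an induction on $j$ using the short exact sequences
\[
0 \to Z\Omega^{j}_D \to F_*\Omega^{j}_D \xrightarrow{d} B\Omega^{j+1}_D \to 0 \qquad\text{and}\qquad 0 \to B\Omega^{j}_D \to Z\Omega^{j}_D \to \Omega^{j}_D \to 0 .
\]
The first holds on any variety by (\ref{eq:defBZ}). For the second, we would first show that the Cartier operator $C \colon Z\Omega^j_D \to \Omega^j_D$ is defined and surjective with kernel $B\Omega^j_D$. We intend to obtain this from (\ref{eq:sesCartierRefl}) once reflexivity of $B\Omega^j_D$ and $Z\Omega^j_D$ is known, so the induction must prove reflexivity and vanishing simultaneously.

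The base case is $j=0$. Here $B\Omega^0_D=0$ and $Z\Omega^0_D=\ker(d\colon F_*\cO_D\to F_*\Omega^1_D)$. Since $D$ is normal and reduced, we expect $Z\Omega^0_D=\cO_D$ (via $F$), and then the vanishing holds because $D$ is Cohen--Macaulay.

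For the inductive step, suppose the statement holds for $j$, with $j+1\le k$. Local cohomology of the first sequence gives
\[
H^{i}_\m(F_*\Omega^{j}_D)\to H^{i}_\m(B\Omega^{j+1}_D)\to H^{i+1}_\m(Z\Omega^{j}_D).
\]
Because $F$ is finite, $H^i_\m(F_*\Omega^j_D)=F_*H^i_\m(\Omega^j_D)$, and this vanishes for $i+j<d$ by Proposition~\ref{prop:hypesurfaceVanishing}. The term $H^{i+1}_\m(Z\Omega^j_D)$ vanishes by induction whenever $i+1+j<d$. Together these give $H^i_\m(B\Omega^{j+1}_D)=0$ for $i+(j+1)<d$. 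Since $j+1\le k$, we have $d-(j+1)\ge \dim{\rm Sing}(D)+2$, so this range covers all $i\le \dim {\rm Sing}(D)+1$. Lemma~\ref{lem:reflexive}, applied with $Z={\rm Sing}(D)$ and using that $B\Omega^{j+1}_D$ is locally free on the smooth locus, then shows that $B\Omega^{j+1}_D$ is reflexive.

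We next need $Z\Omega^{j+1}_D$. It is the kernel of a map from $F_*\Omega^{j+1}_D$, which is reflexive, to a torsion-free sheaf, so it is reflexive. Hence $Z\Omega^{j+1}_D=Z\Omega^{[j+1]}_D$, and likewise $B\Omega^{j+1}_D=B\Omega^{[j+1]}_D$, so (\ref{eq:sesCartierRefl}) becomes $0\to B\Omega^{j+1}_D\to Z\Omega^{j+1}_D\to\Omega^{j+1}_D\to 0$. Its long exact sequence, combined with the vanishing for $B\Omega^{j+1}_D$ just proved and the vanishing for $\Omega^{j+1}_D$ from Proposition~\ref{prop:hypesurfaceVanishing}, gives $H^i_\m(Z\Omega^{j+1}_D)=0$ for $i+j+1<d$.

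The main obstacle is the base case and the bookkeeping of indices. The edge index $i=d-j-1$ in the $B$ step needs $H^{d-j}_\m(Z\Omega^j_D)$, which is not covered by the vanishing range. That case has to be absorbed into the reflexivity requirement, which only uses $i\le\dim{\rm Sing}(D)+1<d-j-1$. It is also possible that the full vanishing for $B\Omega^{j+1}_D$ in this boundary degree instead requires the injectivity of $H^{d-j-1}_\m(B\Omega^{j+1}_D)\to H^{d-j}_\m(Z\Omega^j_D)$ in a more refined form; if so, we would verify that case directly using the $C$-sequence at level $j$.
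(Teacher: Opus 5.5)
Your argument is correct and follows the same skeleton as the paper's proof. Both use induction on $j$. Both use the sequence $0\to Z\Omega^{j}_D\to F_*\Omega^{j}_D\to B\Omega^{j+1}_D\to 0$ with Proposition~\ref{prop:hypesurfaceVanishing} and Lemma~\ref{lem:reflexive} to handle $B\Omega^{j+1}_D$, and then (\ref{eq:sesCartierRefl}) to handle $Z\Omega^{j+1}_D$.

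The indexing in your third paragraph is already complete. The concern in your last paragraph is unfounded: $i=d-j-1$ gives $i+(j+1)=d$, which lies outside the range you need. No refined treatment of that boundary degree is required.

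The real difference is how you obtain reflexivity of $Z\Omega^{j+1}_D$. The paper deduces the vanishing for $Z\Omega^{[j]}_D$ from (\ref{eq:sesCartierRefl}) and then invokes Lemma~\ref{lem:reflexive}. This leaves the identification $Z\Omega^j_D=Z\Omega^{[j]}_D$ implicit, and that identification is needed because the next $B$-step uses $Z\Omega^j_D$ itself. Your observation that $Z\Omega^{j+1}_D$ is the kernel of a map from a reflexive sheaf to a torsion-free one supplies this identification directly.

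You do owe a justification that the target is torsion-free. The differential $d$ maps into $F_*\Omega^{j+2}_D$, and when $j+1=k$ this is $F_*\Omega^{k+1}_D$, which the reflexivity part of Proposition~\ref{prop:hypesurfaceVanishing} does not cover. Torsion-freeness follows from Lemma~\ref{LemmaKoszul} with $m=j+3\le k+2={\rm codim}_D\,{\rm Sing}(D)$: it embeds $\Omega^{j+2}_D$ into the locally free sheaf $\Omega^{j+3}_X|_D$.

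The same device, now with $\Omega^1_D\hookrightarrow\Omega^2_X|_D$, turns your base-case ``expectation'' into a proof. $Z\Omega^0_D$ is reflexive and contains the image of Frobenius $\cO_D\to F_*\cO_D$. By Cartier it agrees with that image on the smooth locus, so it agrees everywhere.
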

\begin{proof}

We argue by induction on $j \leq k$. The case $j=0$ is clear as $B\Omega^0_D = 0$ by definition and $Z\Omega^0_D = \cO_D$ by (\ref{eq:sesCartierRefl}).  Thus, from now on, we may assume that $B\Omega^{j-1}_D$ and $Z\Omega^{j-1}_D$ are reflexive and
\begin{equation} \label{eq:assumZBvan}
H^i_\m(B\Omega^{j-1}_D) = H^i_\m(Z\Omega^{j-1}_D) = 0
\end{equation}
whenever $i+j-1 < d$. Our goal is to establish (\ref{eq:goalZBvan}) and show that both $B\Omega^j_D$ and $Z\Omega^j_D$ are reflexive. Fix $0 < j \leq k$ and $i \geq 0$ such that $i+j<d$.

Consider the short exact sequence which defines boundaries and cycles:
\[
0 \to Z\Omega^{j-1}_D \to F_*\Omega^{j-1}_D \xrightarrow{d} B\Omega^{j}_D \to 0.
\]
From this short exact sequence, assumption (\ref{eq:assumZBvan}), and Proposition \ref{prop:hypesurfaceVanishing} we deduce:
\begin{equation} \label{eq:Bvan}
H^i_\m(B\Omega^{j}_D) = 0
\end{equation}
when $i+j < d$. In particular, this vanishing holds for $i \leq d-1-k = {\rm dim}\, {\rm Sing}(D)+1$, and so $B\Omega^j_D$ is reflexive by Lemma \ref{lem:reflexive}.

Next, consider the short exact sequence (\ref{eq:sesCartierRefl}):
\[
0 \to B\Omega^{j}_D \to Z\Omega^{[j]}_D \to \Omega^{j}_D \to 0.
\]
Again, by Proposition \ref{prop:hypesurfaceVanishing} and the vanishing (\ref{eq:Bvan}), we get 
\[
H^i_\m(Z\Omega^{[j]}_D) = 0
\]
when $i+j<d$. In particular, this vanishing holds for $i \leq d-1-k = {\rm dim}\, {\rm Sing}(D)+1$, and so $Z\Omega^j_D$ is reflexive by Lemma \ref{lem:reflexive}. This concludes the proof of the inductive step: the reflexivity of $B\Omega^j_D$ and $Z\Omega^j_D$ as well as the vanishing (\ref{eq:goalZBvan}).
\end{proof}

One can push this argument one step further to show that $H^i_\m(B\Omega^j_D) = 0$ for $i+j<d$ and $j \leq k+1 = {\rm codim}\, {\rm Sing}(D)-1$.\\



In particular, Proposition \ref{prop:BZVanshingReflexive} allows one to compare Definition \ref{def:mFinjective} with the one from \cite[Definition 1.1]{KW24}, which covers the case of isolated singularities.

 \bibliographystyle{amsalpha}
 \bibliography{bibliography}

\end{document}